\newtheorem{prop}{Proposition}[section]
\newtheorem{lem}[prop]{Lemma}
\newtheorem{thm}[prop]{Theorem}
\newtheorem{cor}[prop]{Corollary}
\newtheorem{definition}[prop]{Definition}
\newtheorem{definitions}[prop]{Definitions}
\newtheorem{ejm}[prop]{Example}
\newtheorem{rem}[prop]{Remark}
\newtheorem{rems}[prop]{Remarks}
\numberwithin{equation}{section}
\newcommand{\N}{\mathbb{N}}
\newcommand{\R}{\mathbb{R}}
\newcommand{\M}{\mathbb{M}}
\newcommand{\QAn}{\mathrm{QAnn}}
\newcommand{\Ann}{\mathrm{Ann}}
\newcommand{\ann}{\mathrm{Ann}_}
\newcommand{\der}{\mathrm{Der}}
\newcommand{\dergr}{\mathrm{Der}_{gr}}
\newcommand{\pder}{\mathrm{PDer}}
\newcommand{\pdergr}{\mathrm{PDer}_\mathrm{gr}}
\newcommand{\Igre}{\mathcal{I}_{gr-e}}
\newcommand{\Ad}{\mathrm{ad}\,}
\newcommand{\End}{\mathrm{End}}
\newcommand{\Z}{\mathbb{Z}}
\newcommand{\F}{\mathcal{F}}
\newcommand{\ider}{\mathrm{IDer}}
\newcommand{\tkk}{\mathrm{TKK}}
\newcommand{\ad}{\mathrm{ad}_}
\newcommand{\Hom}{\mathrm{Hom}}
\newcommand{\Supp}{\mathrm{Supp}}
\newcommand{\limit}{\mathrm{lim}}
\begin{document}

\title{Algebras of quotients of graded Lie algebras}

\subjclass[2000]{Primary 17B60. Secondary 16W25, 16W10}
\keywords{Algebra of quotients, Lie algebra, derivation, prime
algebra, semiprime algebra}

\author{Juana S\'anchez Ortega, Mercedes
Siles Molina}
\address{Departamento de \'Algebra, Geometr{\'\i}a y Topolog{\'\i}a, Universidad de
M{\'a}laga, 29071 M{\'a}laga, Spain} \email{jsanchez@agt.cie.uma.es,
msilesm@uma.es}

\maketitle

\begin{abstract}
In this paper we explore graded algebras of quotients of Lie
algebras with special emphasis on the 3-graded case and
answer some natural questions concerning its relation to
 maximal Jordan systems of quotients.
\end{abstract}

\section*{Introduction}

The study of algebras of quotients of Lie algebras was iniciated
by the second author in~\cite{msm}. Inspired by the associative
definition of ring of quotients given by Utumi in~\cite{Utumi} and
adapting some ideas from
 ~\cite{mart}, a
notion of general algebra of quotients of a Lie algebra was
introduced and also, as a special concrete example,  a maximal algebra of quotients for every
semiprime Lie algebra was built.

The introductory paper ~\cite{msm} was followed by \cite{CabreraSanchez},  where abstract properties of algebras of quotients
were considered, and by \cite{bpss}, where the main objective was to
compute maximal algebras of quotients of Lie algebras of the form
$A^-/Z_A$ and $K/Z_K$, for $A$ a prime associative algebra, $K$ the
Lie algebra of skew elements of a prime associative algebra with
involution and $Z_A$ and $Z_K$ their respective centers.

In the present article  we are interested in algebras of quotients
of graded Lie algebras. One of the reasons to consider them
is the relationship between 3-graded Lie algebras and Jordan pairs: recall that given a Jordan pair
$V$, there exists a 3-graded Lie algebra, $TKK(V)$, such that
$TKK(V)_{-1}=V^-$ and $TKK(V)_{1}=V^+$ (and other additional
properties), where $TKK(V)$ stands for the Tits-Kantor-Koecher Lie
algebra associated to $V$.

Apart from a preliminary section, we have divided the paper into
three parts. We start by introducing in Section 2 graded
algebras of quotients of Lie algebras. As in the non-graded case,
graded algebras of quotients of graded Lie algebras inherit
primeness, semiprimeness and strong nondegeneracy. We study the
relationship between the graded and the non-graded notions of
quotients, and give important examples of graded algebras of
quotients of Lie algebras. Concretely, for $A=\oplus_{\sigma\in
G}A_\sigma$ a graded semiprime associative algebra with an
involution $*$ such that $A_{\sigma}^\ast=A_\sigma$, $K_A$ the Lie
algebra of all the skew elements of $A$, and $Z_{K_A}$ the
annihilator of $K_A$, we prove  that if $Q$ is a $G$-graded
$*$-subalgebra of $Q_s(A)$ (the Martindale symmetric algebra of
quotients of $A$) containing $A$, then $K_Q/Z_{K_Q}$ is a
$G$-graded Lie algebra of quotients of $K_A/Z_{K_A}$, and
$[K_Q,K_Q]/Z_{[K_Q,K_Q]}$ is a graded Lie algebra of quotients of
$[K_A, K_A]/Z_{[K_A,K_A]}$. As a corollary we obtain that
$Q^-/Z_Q$ and $[Q^-,Q^-]/Z_{[Q,Q]}$ are graded algebras of
quotients of $A^-/Z_A$ and $[A^-,A^-]/Z_{[A,A]}$, respectively.

Section 3 deals with the particular case of 3-graded Lie algebras. Finally, in Section 4 we
relate Jordan systems of quotients and Lie algebras of quotients,
where by a Jordan system we understand a Jordan pair, a triple
system or a Jordan algebra. We prove that the maximal system of
quotients of a strongly nondegenerate Jordan system is the associated system to the maximal
Lie algebra of quotients of its TKK (see Section 4 for the
definitions).

\section{Preliminaries}
In this paper we will deal with Lie algebras over an arbitrary
unital and commutative ring of scalars $\Phi$.

Recall that a $\Phi$-module $L$ together with a bilinear map $[\,
, \,]: L \times L \rightarrow L$, denoted by $(x, y) \mapsto
[x,y]$ (called the {\it bracket of} $x$ and $y$), is said to be a
{\it Lie algebra over} $\Phi$ if the following axioms are
satisfied:
\begin{enumerate}[(i)]\itemsep=2mm
\item[(i)] $[x,x]=0$,
\item[(ii)] $[x,[y,z]] + [y,[z,x]] + [z,[x,y]] =0$ ({\it Jacobi identity}).
\end{enumerate}

Given an abelian group $G$ (whose neutral element will be denoted
by $e$), a Lie algebra $L$ is called $G$-{\it graded} if
$L=\oplus_{\sigma \in \, G}L_\sigma$, where $L_\sigma$ is a
$\Phi$-subspace of $L$ and $[L_\sigma, \, L_\tau]\subseteq
L_{\sigma \tau}$ for all $\sigma, \, \tau \in G$. When the group
is understood we will use the term ``graded'' instead of
``$G$-graded''.  For any subset $X$ of $L$, its support is defined
as $\Supp (X) =\{ \sigma \in G \mid x_\sigma \neq 0 \hbox{ for
some }  x\in X\}$. The grading on $L$ is called {\it finite} if
$\Supp (L)$ is a finite set.

Every Lie algebra $L$ can be seen as a graded algebra over any group $G$ by considering the trivial grading, that is $L=\oplus_{\sigma \in \, G}L_\sigma$,
where $L_e=L$ (being $e$ the neutral element of the group $G$) and $L_\sigma = \{0\}$ for any $\sigma\in G\setminus \{e\}$.
 Thanks to this fact, every statement for graded algebras can be read in terms of algebras.

For a graded Lie algebra $L=\oplus_{\sigma \in \,G}L_\sigma$, the
set of \emph{homogeneous elements} is $\bigcup_{\sigma\in G}
L_\sigma$. The elements of $L_\sigma$ are said to be homogeneous
of \emph{degree} $\sigma$. Graded subalgebras are defined in the natural way. 

An ideal $I$ of a graded Lie algebra $L=\oplus_{\sigma \in
G}L_\sigma$ is called a \emph{graded ideal} if whenever $y=\sum
y_\sigma\in I$ we have $y_\sigma \in I$, for every $\sigma \in G$.
It is
straightforward to show that the sum, the intersection and the
product of graded ideals are again graded ideals.

Let $X$ and $Y$ be two subsets of a Lie algebra $L$. The set
$$\Ann_X(Y):=\{x \in X \mid [x, y]=0 \ \mbox{for
every} \ y \in Y\}$$ is called the \textit{annihilator of} $Y$
\textit{in} $X$, while we will refer to
$$\QAn_X(Y):=\{x \in X \mid  [x, [x, y]]=0 \ \mbox{for
every} \ y \in Y\}$$ as the \textit{quadratic annihilator of} $Y$
\textit{in} $X$. It is easy to check, by using the Jacobi
identity, that $\Ann_L(X)$ is a (graded) ideal of $L$ when $X$ is
also a (graded) ideal of $L$, although the quadratic annihilator
of an ideal needs not be an ideal (see \cite[Examples
1.1]{PS_SND}). In the special situation that $X=L=Y$, $\Ann_L(L)$
is called the {\it center} of $L$ and is denoted by $Z_{L}$; the
elements of the center are called {\it total zero divisors}. If
there is no risk of confusion, we will write $\Ann(Y)$ for
$\Ann_L(Y)$.

We say that a (graded) Lie algebra $L$ is {\it (graded) semiprime}
if for every nonzero (graded) ideal $I$ of $L$, $[I, I] \neq 0$.
In the sequel we shall usually denote $[I,I]$ by $I^2$. Next, $L$
is said to be {\it (graded) prime} if for nonzero (graded) ideals
$I$ and $ J$ of $L$, $[I, J]\neq 0$. A (graded) ideal
$I$ of $L$
is said to be {\it (graded) essential} if its intersection with
any nonzero (graded) ideal is again a nonzero (graded) ideal.

\begin{lem} \label{gr-ann}
Let  $I$ be a graded ideal of a graded Lie algebra
$L=\oplus_{\sigma \in \,G}L_\sigma$. Then:
\begin{enumerate}[(i)]\itemsep=2mm
\item[{\rm (i)}] $\Ann (I)$ is a graded Lie ideal of $L$. In
particular, $Z_L$ is a graded ideal of $L$.
\end{enumerate}
If moreover $L$ is graded semiprime, then:
\begin{enumerate}[(i)]\itemsep=2mm
\item[{\rm20(ii)}
]  $I\cap \Ann (I)=0$.
\item[{\rm (iii)}] $I$ is a graded essential ideal of $L$ if and only if $\Ann (I)=0$.
\end{enumerate}
\end{lem}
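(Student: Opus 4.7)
The plan is to handle the three assertions in order, leaning on the Jacobi-identity remark already made in the excerpt (which gives that $\Ann_L(X)$ is a Lie ideal whenever $X$ is) and on the definition of graded semiprimeness.

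For (i), since the excerpt already grants that $\Ann(I)$ is a Lie ideal, I only need to verify gradedness. I would take an arbitrary $y=\sum_\sigma y_\sigma\in\Ann(I)$ and show each $y_\sigma$ lies in $\Ann(I)$. Because $I$ is a graded ideal it is spanned by its homogeneous elements, so it suffices to check $[y_\sigma,x_\tau]=0$ for each homogeneous $x_\tau\in I_\tau\subseteq I$. From $[y,x_\tau]=0$ one gets $\sum_\sigma[y_\sigma,x_\tau]=0$, and since $[y_\sigma,x_\tau]\in L_{\sigma\tau}$ these summands live in pairwise distinct homogeneous components; hence each vanishes. Specializing to $I=L$ yields that $Z_L=\Ann_L(L)$ is a graded ideal.

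For (ii), set $J:=I\cap\Ann(I)$. By (i) and the fact that the intersection of two graded ideals is a graded ideal, $J$ is a graded ideal of $L$. By construction $[J,J]\subseteq[I,\Ann(I)]=0$, so graded semiprimeness forces $J=0$.

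For (iii), the forward implication is immediate: if $\Ann(I)\neq 0$, then $\Ann(I)$ is a nonzero graded ideal by (i), and by (ii) it meets $I$ trivially, contradicting essentiality. Conversely, assume $\Ann(I)=0$ and let $J$ be any nonzero graded ideal. Since $I$ and $J$ are (graded) ideals, $[I,J]\subseteq I\cap J$; and if this commutator were zero then $J\subseteq\Ann(I)=0$, a contradiction. Hence $I\cap J\supseteq[I,J]\neq 0$, which is again a graded ideal, proving $I$ is graded essential. I do not anticipate a real obstacle here; the only point requiring a touch of care is the splitting-into-homogeneous-components argument in (i), where one must remember that $I$'s gradedness allows the test to be performed against homogeneous $x\in I$ only.
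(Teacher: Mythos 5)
Your proof is correct and follows essentially the same route as the paper: part (i) rests on the same observation that the brackets $[y_\sigma,x_\tau]$ land in pairwise distinct homogeneous components $L_{\sigma\tau}$ and hence vanish individually. For (ii) and (iii) the paper simply defers to \cite[Lemma 1.2]{msm}, and your explicit arguments are exactly the standard ones used there.
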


\begin{proof} (i). The only thing we are going to show is that every
homogeneous component of any element $x\in \Ann (I)$
is again in $\Ann (I)$. Fix20$\tau\in G$. Note that $[x_\sigma, \,
I_\tau]=0$ for every $\sigma \in G$ because otherwise there would
exist $y_\tau \in I_\tau$ such that $[x_\sigma, \, y_\tau]\neq 0$
for some $\sigma \in G$; this would imply $0\neq [x, \,
I_\tau]\subseteq [x, \, I]=0$, a contradiction. Hence $[x_\sigma,
\, I]= \oplus_{\tau \in \,G}[x_\sigma, \, I_\tau]=0$.

To obtain (ii) and (iii), follow the proofs of conditions (i) and
(ii) in \cite[Lemma 1.2]{msm}.
\end{proof}

Note that if $L$ is (graded) semiprime, then $I^2$ is a (graded)
essential ideal if $I$ is so. Further, the intersection of
(graded) essential ideals is clearly again a (graded) essential
ideal.

Given an element $x$ of a Lie algebra $L$, we may define a map $ \Ad
x\colon L \to L$ by $\Ad x(y)=[x,\, y]$. A (homogeneous) element
$x$ of a (graded) Lie algebra $L$ is a {\it (homogeneous) absolute
zero divisor} if $(\Ad x)^2=0$. The algebra $L$ is said to be {\it
(graded) strongly nondegenerate} if it does not contain nonzero
(homogeneous) absolute zero divisors. It is obvious from the
definitions that (graded) strongly nondegenerate Lie algebras are
(graded) semiprime, but the converse does not hold (see \cite[Remark
1.1]{msm}).

\section{Graded algebras of quotients of graded Lie algebras}

Following the pattern of \cite{msm} it is possible to introduce algebras of quotients of graded Lie algebras, and to build a maximal graded algebra of quotients for every graded semiprime Lie algebra. We devote this section to this end and to relate both notions: the graded and the non-graded ones. Some interesting examples of graded algebras of quotients have also been found.

\begin{definitions}
{\rm Let $L=\oplus_{\sigma \in \,G}L_\sigma$ be a graded
subalgebra of a graded Lie algebra $Q=\oplus_{\sigma \in
\,G}Q_\sigma$.
We say that $Q$ is a {\it graded algebra of quotients} of $L$ or also that $L$ is a {\it graded subalgebra of quotients of} $Q$ if the following 
equivalent conditions are satisfied:
\begin{enumerate}
\item[(i)] Given $0\neq p_\sigma \in Q_\sigma$ and $q_\tau \in Q_\tau$,
there exists $x_\alpha \in L_\alpha$ such that $[x_\alpha, \,
p_\sigma]\neq 0$ and $[x_\alpha, \, _L(q_\tau)]\subseteq L$, where $_L(q_\tau)$ denotes the linear span in $Q$ of $q_\tau$ and the elements of the form $\ad {x_1} \cdots \ad {x_n} q_\tau,$ with $n\in \mathbb{N}$ and $x_1,\dots,  x_n \in L$.
\item[(ii)] $Q$ is graded ideally absorbed into $L$, i.e., for every nonzero element
$q_\tau \in Q_\tau$ there exists a nonzero graded ideal $I$ of $L$
with $\ann L (I)=0$ and such that $0\neq [I, \, q_\tau]\subseteq
L$.
\end{enumerate}
 If for any nonzero $ p_\sigma \in Q_\sigma$ there exists
$x_\alpha \in L_\alpha$ such that $0 \neq [x_\alpha, \,
p_\sigma]\in L$, then we say that $Q$ is a {\it graded weak
algebra of quotients} of $L$, and $L$ is called a {\it graded weak
subalgebra of quotients of} $Q$.}
\end{definitions}

\begin{rems}
{\rm  The definitions before are consistent with the non-graded ones (see \cite[Definitions 2.1 and 2.5]{msm}) in the sense that if $Q$ is a (weak) algebra of quotients of a Lie
algebra $L$, then it is also a graded (weak) algebra of quotients
of $L$ when considering the trivial gradings on $L$ and $Q$. 

The necessary and sufficient condition for a graded Lie algebra to
have a graded (weak) algebra of quotients is the absence of
homogeneous total zero divisors different from zero, condition
that turns out to be equivalent to have zero center.}
\end{rems}

\begin{rem}
{\rm Although every graded algebra of quotients is a graded weak
algebra of quotients, the converse is not true, as shown in the
following example.

Consider the ${\mathbb C}$-module $P$ of all polynomials
$\sum_{r=0}^m \alpha_r x^r$, with $\alpha_i \in {\mathbb C}$ and
$m\in {\mathbb N}$, with the natural ${\mathbb Z}$-grading.
Denote by $\sigma: {\mathbb C}\to {\mathbb C} \ $  the complex
conjugation. Then the following product makes $P$ into a ${\mathbb
Z}$-graded Lie algebra:
$$\left[\sum_{r=0}^m \alpha_r x^r,\ \sum_{s=0}^n \beta_s x^s\right]= \sum_{r, s} (\alpha_r\beta^\sigma_s-\beta_s\alpha_r^\sigma )
x^{r+s}.$$ Let $Q$ be the ${\mathbb Z}$-graded Lie algebra $P/I$,
where $I$ denotes the ${\mathbb Z}$-graded ideal of $P$ consisting
of all polynomials whose first nonzero term has degree at least 4,
and let $L$ be the following graded subalgebra of $Q$:
$$L=\{ \overline\alpha_0 +\overline\alpha_2 \overline x^2 +\overline\alpha_3 \overline x^3\ \vert \
\alpha_0, \alpha_2, \alpha_3 \in {\mathbb C}\},$$ where $\overline
y$ denotes the class of an element $y\in P$ in $P/I$. Then $Q$ is
a graded weak algebra of quotients of $L$, but $Q$ is not a graded
quotient algebra of $L$ since no $l\in L$ satisfies $[l,\overline
x]\in L$ and $[l,\ \overline x^3]\neq 0$ (see \cite[Remark 2.6]{msm}).}
\end{rem}

As it happened in the non-graded case  (see \cite[Proposition 2.7]{msm}), some properties of a graded Lie algebra $L$ are inherited by each of its graded weak algebras of quotients. 
We give here a different approach to show that graded strong nondegeneracy is inherited.

\begin{prop} \label{semiprimeness}
Let $Q=\oplus_{\sigma \in \,G}Q_\sigma$ be a graded weak algebra
of quotients of a graded subalgebra $L$ and suppose that $\Phi$ is 2 and 3-torsion free. Then $L$ graded
strongly nondegenerate implies $Q$ graded strongly nondegenerate.
\end{prop}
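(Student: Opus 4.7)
My plan is to argue by contradiction: suppose there is a nonzero homogeneous $q \in Q_\sigma$ with $(\Ad q)^2 = 0$, and construct from it a nonzero homogeneous absolute zero divisor of $L$, contradicting graded strong nondegeneracy.

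The first step extracts a Jordan-type identity. Since $\Ad q$ is a derivation,
\[
0 = (\Ad q)^2 [a, b] = [(\Ad q)^2 a, b] + 2\,[[q, a], [q, b]] + [a, (\Ad q)^2 b] = 2\,[[q, a], [q, b]],
\]
and because $\Phi$ is $2$-torsion free, $[[q, a], [q, b]] = 0$ for all $a, b \in Q$. Thus $[Q, q]$ is an abelian subspace of $Q$, and a direct Jacobi computation from $(\Ad q)^2 = 0$ also gives $[q, [Q, q]] = 0$.

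Using the graded weak quotients hypothesis, pick a homogeneous $x \in L_\alpha$ with $y := [x, q] \in L_{\alpha\sigma}\setminus\{0\}$. Since $y \in [Q, q]$ the previous step yields $[q, y] = 0$ and $[y, [q, a]] = 0$ for every $a \in Q$. The aim is to show $y$ is a homogeneous absolute zero divisor of $L$. Expanding $[y, l]$ via Jacobi and then $[y, [y, l]]$, the two terms involving $[y, [q, \cdot]]$ vanish by the previous identities, leaving
\[
(\Ad y)^2 l = [[y, x], [q, l]] \quad \text{for all } l \in L.
\]
The task reduces to proving that this quantity vanishes. For this, the plan is to apply $(\Ad q)^2$ to nested brackets $[a, [b, c]]$ (and, if necessary, to longer ones) in order to extract further Jordan-type identities on $[Q, q]$; one such identity is
\[
[[q, b], [[q, a], c]] + [[[q, a], b], [q, c]] = 0.
\]
By a careful choice of $a, b, c$ and an iteration of Jacobi that also uses $[q, [y, x]] = 0$ (itself obtained from $[q, y] = 0$ via Jacobi), the expression $[[y, x], [q, l]]$ collapses to $0$. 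The $3$-torsion-free hypothesis enters precisely to cancel a factor of $3$ appearing in the resulting higher-order expansion.

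Once $y$ is shown to be a homogeneous absolute zero divisor of $L$, the graded strong nondegeneracy of $L$ forces $y = 0$, contradicting $[x, q] \neq 0$. The main obstacle is this final collapse: direct Jacobi manipulations of $(\Ad y)^2 l$ tend to be cyclic and merely reproduce the original expression, so the real content lies in mining additional identities from $(\Ad q)^2$ acting on nested brackets and combining them with the $3$-torsion-free cancellation. The gradedness is preserved throughout since $q$, $x$, $y$ are homogeneous and the weak quotients property is applied only to homogeneous elements.
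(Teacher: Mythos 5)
Your opening moves are sound and agree with the paper's: arguing by contradiction from a nonzero homogeneous $q$ with $(\Ad q)^2=0$, using the graded weak quotient property to produce $0\neq y=[x,q]\in L$ homogeneous, and extracting the identities $[[q,a],[q,b]]=0$ (so $[Q,q]$ is abelian) together with the reduction $(\Ad y)^2l=[[y,x],[q,l]]$. The problem is the step you leave as a ``plan'': showing that this last expression vanishes, i.e.\ that $y$ itself is an absolute zero divisor. That is not a formal consequence of $(\Ad q)^2=0$ plus $2$- and $3$-torsion-freeness, and no amount of further Jacobi mining will produce it. Writing $A=\Ad q$, $X=\Ad x$, the hypotheses give $A^2=0$ and (by $2$-torsion-freeness, since $\Ad((\Ad q)^2z)=0$) the sandwich identity $A\,(\Ad z)\,A=0$ for every $z$; these kill the cross terms in $(\Ad y)^2=(XA-AX)^2$ and leave
\begin{equation*}
(\Ad y)^2=-A\,X^2A ,
\end{equation*}
which is exactly your $[[y,x],[q,l]]$. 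The sandwich identity does not apply to $AX^2A$ because $X^2$ is not an inner derivation, and in general this operator is nonzero. What the identities do give is one level weaker: $(\Ad y)^3=-(XA-AX)AX^2A=-X(AXA)X A+\cdots=0$, and, with more work (this is where $3$-torsion-freeness genuinely enters), that each element $[y,[y,u]]$ is an absolute zero divisor. This is Kostrikin's descent, and it is precisely the content of the result the paper invokes, \cite[Theorem 2.1]{PS_SND}: for $q\in\QAn_Q(L)$ and $y=[q,x]\in L$ one gets $[y,[y,u]]\in\QAn_L(L)$ for all $u\in L$ --- not $y\in\QAn_L(L)$.

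Consequently the paper's proof has a different shape from yours: it applies the strong nondegeneracy of $L$ \emph{twice}. First, $\QAn_L(L)=0$ forces $[y,[y,u]]=0$ for every $u\in L$, which says $y\in\QAn_L(L)$; then the same fact forces $y=0$, the desired contradiction. Your argument invokes nondegeneracy of $L$ only once, at the very end, and tries to obtain the intermediate vanishing purely from identities in $Q$; that intermediate claim is stronger than what is true, so the proof cannot be completed along the route you describe. To repair it, replace ``show $(\Ad y)^2=0$'' by ``show $(\Ad y)^2u\in\QAn_L(L)$ for all $u\in L$'' (either by proving the descent identity directly or by citing \cite[Theorem 2.1]{PS_SND}), and then insert the extra application of $\QAn_L(L)=0$ before concluding $y=0$.
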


\begin{proof}
Suppose that there exists an element $0\neq q_\tau$ in
$Q_\tau$ such that ${(\Ad q_\tau)}^2=0$. Since $Q$ is a graded
weak algebra of quotients of $L$, $0\neq y:=[q_\tau,\,
x_\sigma]\in L$ for some $x_\sigma\in L_\sigma$. As $q_\tau$ is in
$\QAn_Q(Q)\subseteq \QAn_Q(L)$ we have, by \cite[Theorem
2.1]{PS_SND}, that $[y,\, [y, \, u]]\in \QAn_L(L)$ for every $u\in
L$ (observe that the map $u\mapsto \Ad u$  gives an isomorphism
between $L$ and its image inside $A(Q)$, the Lie subalgebra of
$\End (Q)$ generated by the elements $\Ad x$ for $x$ in $Q$; this
allows to apply the result in \cite{PS_SND}). But $\QAn_L(L)$ is
zero, because $L$ is graded strongly nondegenerate, therefore
$[y,\, [y, \, u]]=0$ for every $u\in L$. Again the same reasoning
leads to $y=0$, a contradiction. This shows the statement.
\end{proof}

Now we show the relationship between graded (weak)
algebras of quotients and (weak) algebras of quotients, a useful
tool that combined with other results will provide with examples
of graded algebras of quotients.

\begin{lem} \label{weak quot and weak gr-quot}
Let $L$ be a graded subalgebra of a graded Lie algebra
$Q=\oplus_{\sigma \in \,G}Q_\sigma$. If $Q$ is a weak algebra of
quotients of $L$ then $Q$ is also a graded weak algebra of
quotients of $L$.
\end{lem}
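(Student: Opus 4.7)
The plan is to take the weak quotient condition, apply it to a homogeneous target element, and then extract a homogeneous witness by projecting onto homogeneous components.

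More precisely, I would start with a nonzero homogeneous element $p_\sigma \in Q_\sigma$. Since $Q$ is (non-graded) weak algebra of quotients of $L$, there exists some $x \in L$ with $0 \neq [x, p_\sigma] \in L$. Decompose $x$ into its homogeneous components, $x = \sum_{\alpha \in F} x_\alpha$ where $F \subseteq G$ is finite and each $x_\alpha \in L_\alpha$. Then
\[
[x, p_\sigma] \;=\; \sum_{\alpha \in F}[x_\alpha, p_\sigma],
\]
and each $[x_\alpha, p_\sigma]$ lies in $Q_{\alpha\sigma}$, so this sum is the homogeneous decomposition of $[x, p_\sigma]$ in $Q$.

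The key observation is now that because $L = \oplus_{\tau \in G} L_\tau$ is a graded subalgebra of $Q$, any element of $L$ has its homogeneous components (taken in $Q$) lying in the corresponding $L_\tau$. Applied to the element $[x, p_\sigma] \in L$, this forces $[x_\alpha, p_\sigma] \in L_{\alpha\sigma} \subseteq L$ for every $\alpha \in F$. Since $[x, p_\sigma] \neq 0$, at least one summand is nonzero: pick $\alpha \in F$ with $[x_\alpha, p_\sigma] \neq 0$. Then $x_\alpha \in L_\alpha$ is a homogeneous element with $0 \neq [x_\alpha, p_\sigma] \in L$, which is exactly what the graded weak quotient definition requires.

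There is essentially no obstacle here; the proof is a one-line degree argument. The only thing to make explicit is the remark that, in a graded subalgebra, homogeneous components inherited from the ambient grading always lie back in the subalgebra, which is immediate from the direct sum decomposition.
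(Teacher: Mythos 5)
Your argument is correct and is exactly the paper's proof, just written out in more detail: the paper likewise applies the non-graded hypothesis to a homogeneous $q_\tau$, decomposes the witness $x$ into homogeneous components, and picks an $\alpha$ with $0\neq [x_\alpha, q_\tau]\in L_{\alpha\tau}$. Your explicit remark that the map $\alpha\mapsto\alpha\sigma$ separates degrees (so the sum really is the homogeneous decomposition) and that graded subalgebras contain the homogeneous components of their elements is precisely what the paper's ``in particular'' is silently invoking.
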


\begin{proof}
For $0\neq q_\tau\in Q_\tau$, apply the hypothesis to find $x\in
L$ such that $0\neq [x,\, q_\tau]\in L$; in particular, $0\neq
[x_\alpha, q_\tau]\in L_{\alpha\tau}$ for some $\alpha \in G$.
\end{proof}

Recall that given a subalgebra $L$ of a Lie algebra $Q$ and an
element $q\in Q$, the set $$(L:\, q):=\{x\in L \mid [x,\,
_L(q)]\subseteq L\}$$ is an ideal of $L$ (see \cite[Lemma 2.10
(i)]{msm}). In case of being $L$ a graded subalgebra of a graded Lie algebra $Q$ and $q_\tau \in Q$, it can be proved $(L:\, q_\tau)$ is indeed a graded ideal of $L$.

\begin{lem} \label{comun property}
Let $Q=\oplus_{\sigma \in \,G}Q_\sigma$ be a graded algebra of
quotients of a graded semiprime Lie algebra $L$. Then, given
$0\neq p_\sigma \in Q_\sigma$ and $q_{\tau_i}\in Q_{\tau_i}$, with
$\tau_i\in G$ and $i=1, \ldots, n$ (for any $n\in \mathbb{N}$),
there exist $\alpha \in G$ and $x_\alpha \in L_\alpha$ such that
$[x_\alpha, \, p_\sigma]\neq 0$ and $[x_\alpha, \,
_L(q_{\tau_i})]\subseteq L$ for every $i=1, \ldots, n$.
\end{lem}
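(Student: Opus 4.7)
The plan is to show that the graded ideal
\[J := \bigcap_{i=1}^{n} (L:q_{\tau_i}) \;=\; \bigcap_{i=1}^{n} \{x\in L : [x,\,{}_L(q_{\tau_i})]\subseteq L\}\]
of $L$ (graded because each $(L:q_{\tau_i})$ is graded, as recalled just before the lemma, and finite intersections of graded ideals are graded) contains a homogeneous element $x_\alpha$ with $[x_\alpha, p_\sigma]\neq 0$. Any such $x_\alpha$ automatically satisfies $[x_\alpha,\,{}_L(q_{\tau_i})]\subseteq L$ for every $i$, finishing the proof. I would split the task into two claims: (a) $\Ann_L(J)=0$; and (b) $[J, p_\sigma]\neq 0$.

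For (a), first prove that each $(L:q_{\tau_i})$ is a graded essential ideal of $L$. By Lemma~\ref{gr-ann}(iii) this is equivalent to showing $\Ann_L((L:q_{\tau_i}))=0$. Assume $y\in \Ann_L((L:q_{\tau_i}))$ is nonzero and pick a nonzero homogeneous component $y_\beta$. Condition (i) in the definition of graded algebra of quotients, applied to $y_\beta$ and $q_{\tau_i}$, produces $x_\alpha\in L_\alpha\cap (L:q_{\tau_i})$ with $[x_\alpha,y_\beta]\neq 0$. But $[y,x_\alpha]=0$, and since $[y_\gamma,x_\alpha]\in L_{\gamma\alpha}$ for each homogeneous component $y_\gamma$ of $y$, comparison of degrees forces every $[y_\gamma,x_\alpha]$, and in particular $[y_\beta,x_\alpha]$, to vanish -- a contradiction. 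Finite intersections of graded essential ideals remain graded essential (as noted after Lemma~\ref{gr-ann}), so $J$ is graded essential and hence $\Ann_L(J)=0$ by another application of Lemma~\ref{gr-ann}(iii).

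For (b), suppose toward a contradiction that $[J,p_\sigma]=0$. Apply condition (i) to $p_\sigma$ \emph{with $q_\tau=p_\sigma$ itself}: this yields $x_\alpha\in L_\alpha\cap(L:p_\sigma)$ for which $r:=[x_\alpha,p_\sigma]\in L_{\alpha\sigma}$ is nonzero. For any $y\in J$, the Jacobi identity gives
\[[y,r] \;=\; [[y,x_\alpha],\,p_\sigma] + [x_\alpha,\,[y,p_\sigma]],\]
and both summands vanish: the first because $[y,x_\alpha]\in J$ (as $J$ is an ideal and $x_\alpha\in L$) and $[J,p_\sigma]=0$, the second because $[y,p_\sigma]=0$ directly. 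Thus $r\in \Ann_L(J)=0$, contradicting $r\neq 0$. Consequently $[J,p_\sigma]\neq 0$, and since $J$ is graded, some homogeneous $x_\alpha\in J_\alpha$ satisfies $[x_\alpha,p_\sigma]\neq 0$, as required.

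The main subtlety is step (b): it is really the statement that a graded ideal of $L$ with zero $L$-annihilator cannot be annihilated in $Q$ by any nonzero homogeneous element. The key move is invoking condition (i) with $q_\tau$ chosen to be $p_\sigma$ itself, which produces a nonzero element $r$ of $L$ from $p_\sigma$ and transfers the putative ``annihilation by $J$'' from $Q$ back down to $L$, where the already-established $\Ann_L(J)=0$ yields the contradiction.
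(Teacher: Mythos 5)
Your proof is correct and follows essentially the same strategy as the paper's: intersect the graded essential ideals $(L:q_{\tau_i})$, observe that the resulting graded ideal $J$ has zero annihilator in $L$ and hence cannot be annihilated by $p_\sigma$ inside $Q$, and then extract a homogeneous component of an element of $J$ not commuting with $p_\sigma$. The only difference is that you supply direct (and valid) arguments for the two ingredients the paper imports from \cite{msm} -- the graded essentiality of each $(L:q_{\tau_i})$ and the passage from $\Ann_L(J)=0$ to the non-vanishing of $[J,p_\sigma]$ -- so your version is self-contained where the paper cites Lemmas 2.10(i) and 2.11 of \cite{msm}.
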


\begin{proof}

Consider  $0\neq p_\sigma \in Q_\sigma$ and $q_{\tau_i}\in
Q_{\tau_i}$,  with $i=1, \ldots, n$. By \cite[Lemma 2.10 (i)]{msm},
$(L:\, q_{\tau_i})$ is a graded essential ideal of $L$ for every
$i$, hence $I=\cap^{n}_{i=1}(L:\, q_{\tau_i})$ is again a graded
essential ideal of $L$. Condition (iii) in Lemma \ref{gr-ann}
implies $\ann L (I)=0$ and by  \cite[Lemma 2.11]{msm} we obtain
$\ann Q (I)=0$. So, there exists $x\in I$ such that
$[x,\,p_\sigma]\neq 0$, and if we decompose $x$ into its
homogeneous components we find some $\alpha \in G$ satisfying
$[x_\alpha, \, p_\sigma]\neq 0$. Now the proof is complete because
$x_\alpha \in I$ as $I$ is a graded ideal and $x\in I$.
\end{proof}

\begin{prop} \label{quot and gr-quot}

Let $L$ be a graded subalgebra of a graded Lie algebra
$Q=\oplus_{\sigma \in \,G}Q_\sigma$. Consider the following
conditions:
\begin{enumerate}

\item[\rm{(i)}] $Q$ is an algebra of quotients of $L$.
\item[\rm{(ii)}] $Q$ is a graded algebra of quotients of $L$.
\end{enumerate}

Then (i) implies (ii). Moreover, if $L$ is graded semiprime then
 (ii) implies (i).
\end{prop}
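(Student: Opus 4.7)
The plan is to handle the two implications separately: in the forward direction I will decompose a non-graded witness into its homogeneous components, and in the reverse direction I will aggregate the graded witnesses supplied by Lemma \ref{comun property}.

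For (i) $\Rightarrow$ (ii), let $0\neq p_\sigma \in Q_\sigma$ and $q_\tau \in Q_\tau$. The non-graded hypothesis yields some $x\in L$ with $[x,p_\sigma]\neq 0$ and $[x,{}_L(q_\tau)]\subseteq L$; the goal is to refine this to a single homogeneous component. Writing $x=\sum_\alpha x_\alpha$, the summands $[x_\alpha,p_\sigma]$ lie in the pairwise distinct homogeneous pieces $Q_{\alpha\sigma}$, so $[x,p_\sigma]\neq 0$ forces $[x_\alpha,p_\sigma]\neq 0$ for some $\alpha$. The key remark for the inclusion is that ${}_L(q_\tau)$ is spanned by homogeneous elements of $Q$: each generator $\ad {x_1}\cdots \ad {x_n} q_\tau$ is multilinear in $x_1,\dots,x_n\in L$, so expanding each $x_i$ into its homogeneous components produces a sum of analogous expressions with homogeneous $x_i$, each of which is itself homogeneous in $Q$. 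For every such homogeneous $v\in {}_L(q_\tau)$, the decomposition $[x,v]=\sum_\alpha [x_\alpha,v]$ has its summands in distinct homogeneous components of $Q$, and since $L$ is a graded subspace, $[x,v]\in L$ forces every $[x_\alpha,v]\in L$. Thus the same $x_\alpha$ witnesses condition (i) of the graded definition.

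For (ii) $\Rightarrow$ (i), assume $L$ graded semiprime and take $0\neq p\in Q$ and $q\in Q$. Decompose $p=\sum_\sigma p_\sigma$ and $q=\sum_{i=1}^n q_{\tau_i}$ (finite sum), and pick $\sigma$ with $p_\sigma\neq 0$. Lemma \ref{comun property} then supplies $\alpha\in G$ and $x_\alpha\in L_\alpha$ with $[x_\alpha,p_\sigma]\neq 0$ and $[x_\alpha,{}_L(q_{\tau_i})]\subseteq L$ for every $i=1,\dots,n$. The conditions required on $p$ and $q$ are assembled as follows: the summands of $[x_\alpha,p]=\sum_\sigma [x_\alpha,p_\sigma]$ sit in distinct homogeneous pieces, so $[x_\alpha,p]\neq 0$; and since every generator $\ad {y_1}\cdots \ad {y_m} q$ of ${}_L(q)$ splits as $\sum_i \ad {y_1}\cdots \ad {y_m} q_{\tau_i}$, we get ${}_L(q)\subseteq \sum_{i=1}^n {}_L(q_{\tau_i})$, whence $[x_\alpha,{}_L(q)]\subseteq L$.

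The only delicate point is the interaction of the construction ${}_L(-)$ with the grading, but this is not a serious obstacle: it reduces to the observations that adjoint words on a graded $L$ can be taken to be built from homogeneous elements, and that $L$ is a graded subspace of $Q$. The graded-semiprime hypothesis is used only to invoke Lemma \ref{comun property}, which in turn depends on the graded essentiality of the ideals $(L:q_{\tau_i})$ through Lemma \ref{gr-ann}(iii).
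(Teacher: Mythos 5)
Your proof is correct and follows essentially the same route as the paper: for (i) $\Rightarrow$ (ii) you decompose the non-graded witness $x$ into homogeneous components (your inline argument that each $x_\alpha$ still satisfies $[x_\alpha,{}_L(q_\tau)]\subseteq L$ is just an unpacking of the paper's unproved remark that $(L:q_\tau)$ is a graded ideal), and for (ii) $\Rightarrow$ (i) you invoke Lemma \ref{comun property} exactly as the paper does. No gaps.
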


\begin{proof}
(i) $\Rightarrow$ (ii). Given $0\neq p_\sigma \in Q_\sigma$ and
$q_\tau\in Q_\tau$, by the hypothesis there exists $x\in L$
satisfying $[x,\, p_\sigma]\neq 0$ and $[x,\, _L(q_\tau)]\subseteq
L$, that is, $x\in (L:\, q_\tau)$. This means by the considerations above
that $x_\alpha \in (L:\, q_\tau)$.

(ii) $\Rightarrow$ (i). Suppose now that $Q$ is a graded algebra
of quotients of $L$, with $L$ graded semiprime. Take $p,\, q$ in
$Q$, with $p\neq 0$; let $\sigma\in G$ be such that
$p_{\sigma}\neq 0$ and write $\tau_1,\,\tau_2,\ldots,\tau_n$ to
denote the elements of $\Supp(q)$.
 By Lemma
\ref{comun property} it is possible to find an element
$x_\alpha\in L_\alpha$ satisfying $[x_\alpha,\, p_{\sigma}]\neq 0$
and $[x_\alpha, \, _L(q_{\tau_i})]\subseteq L$ for every
$i=1,\ldots, n$, hence $[x_\alpha, p]\neq 0$ and $[x_\alpha, \,
_L(q)]\subseteq L$; this shows that $Q$ is an algebra of quotients
of $L$.
\end{proof}

We continue the section with some important examples of graded
algebras of quotients of graded Lie algebras. For brevity, we will not include all the definitions involved in
the first example (we refer the reader to \cite[5.4]{damour}). However, in Section 4 we will explain what the $\tkk$-algebra of a
Jordan pair is.
 Recall
that any strongly prime hermitian Jordan pair $V$ is sandwiched as
follows (see \cite[5.4]{damour}):
$$H(R, \,\ast) \lhd V \leq H(Q(R), \ast),$$
where $R$ is a $\ast$-prime associative pair with involution and
$Q(R)$ is its associative Martindale pair of symmetric quotients.

\begin{ejm}
{\rm   Let $R$ be a $\ast$-prime associative pair with involution,
and $Q(R)$ its Martindale pair of symmetric quotients. Then
$TKK(H(Q(R), \ast))$ is a 3-graded algebra of quotients of
$TKK(H(R, \ast))$.}
\end{ejm}
\begin{proof}
From \cite[Proposition 4.2 and Corollary 4.3]{esther},
$Q:=TKK(H(Q(R), \ast))$ is ideally absorbed into the strongly
prime Lie algebra $L:=TKK(H(R, \ast))$; use \cite[Proposition
2.15]{msm} to obtain that $Q$ is an algebra of quotients of $L$
and Proposition \ref{quot and gr-quot} to reach the conclusion.
\end{proof}

Now, we provide with examples of graded Lie algebras of quotients
of graded Lie algebras arising from associative algebras graded by
abelian groups.

Let $A$ be an associative algebra with an involution $*$; then the
set of skew elements $K_A= \{ x \in A ~ : ~ x^* = -x \}$ is a Lie
subalgebra of $A^-$. If $A$ is $G$-graded and
$A_\sigma^\ast=A_\sigma$, for all $\sigma \in G$, then $K_A$ and
$K_A/Z_{K_A}$ are $G$-graded Lie algebras too.

For a semiprime associative algebra, denote by $Q_s(A)$ its
Martindale symmetric ring of quotients (see, for example
\cite{BMM} for more information about this ring of quotients).

\begin{thm}\label{algcocientegK} Let $A$ be a  semiprime $G$-graded
associative algebra with an involution $*$ such that
$A_\sigma^\ast=A_\sigma$, for every $\sigma \in G$, and let
$Q=\oplus_{\sigma \in \,G}Q_\sigma$
 be a $G$-graded overalgebra of $A$ contained in $Q_s(A)$
 and satisfying $Q_\sigma^\ast=Q_\sigma$ for every $\sigma \in G$.
Then:
\begin{enumerate}[(i)]\itemsep=2mm
\item[\rm{(i)}] $K_Q/Z_{K_Q}$ is a graded algebra of quotients of
$K_A/Z_{K_A}$.
\item[\rm{(ii)}] $[K_Q,K_Q]/ Z_{[K_Q,K_Q]}$ is a graded  algebra of quotients of $[K_A, K_A]/Z_{[K_A,K_A]}$.
\end{enumerate}
\end{thm}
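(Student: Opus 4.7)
The plan is to reduce both statements to their non-graded counterparts and then invoke Proposition \ref{quot and gr-quot}, whose implication (i)$\Rightarrow$(ii) is unconditional. First I would verify that every Lie algebra appearing in the statement inherits a natural $G$-grading. Since $A_\sigma^\ast=A_\sigma$ for every $\sigma\in G$, the involution preserves each homogeneous component, so $K_A=\bigoplus_{\sigma\in G}(K_A)_\sigma$ with $(K_A)_\sigma=K_A\cap A_\sigma$, and similarly $K_Q=\bigoplus_{\sigma\in G}(K_Q)_\sigma$ using $Q_\sigma^\ast=Q_\sigma$. By Lemma \ref{gr-ann}(i), the centers $Z_{K_A}$ and $Z_{K_Q}$ are graded ideals, and derived subalgebras are automatically graded; consequently $K_Q/Z_{K_Q}$ and $[K_Q,K_Q]/Z_{[K_Q,K_Q]}$ are $G$-graded, and $K_A/Z_{K_A}$, $[K_A,K_A]/Z_{[K_A,K_A]}$ sit inside them as graded subalgebras once we check $Z_{K_Q}\cap K_A\subseteq Z_{K_A}$ and the corresponding inclusion for the derived algebras (this requires a short argument using semiprimeness of $A$ and the defining property of $Q_s(A)$).

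Second, I would invoke the non-graded result to the effect that $K_Q/Z_{K_Q}$ is an algebra of quotients of $K_A/Z_{K_A}$ (and analogously for the derived pair) for a semiprime associative algebra with involution whose overalgebra lies in $Q_s(A)$. This is the natural extension to the semiprime case of the computations in \cite{bpss}, and it can be proved along the same lines as \cite[Proposition 2.15]{msm} (quoted in the preceding Example) by producing, for each nonzero $\bar q\in K_Q/Z_{K_Q}$, an essential ideal of $K_A/Z_{K_A}$ that absorbs $\bar q$ into $K_A/Z_{K_A}$; the absorbing ideal is built out of elements of the form $x-x^\ast$ in a dense ideal of $A$ against which $q$ can be multiplied back into $A$, and then one reduces modulo the center using Lemma \ref{gr-ann}.

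Third, with the ungraded version in hand for both pairs, I apply Proposition \ref{quot and gr-quot} directly. Since (i)$\Rightarrow$(ii) holds without any semiprimeness hypothesis, the graded structure established in the first step is all that is needed to upgrade the conclusion to: $K_Q/Z_{K_Q}$ is a graded algebra of quotients of $K_A/Z_{K_A}$, and $[K_Q,K_Q]/Z_{[K_Q,K_Q]}$ is a graded algebra of quotients of $[K_A,K_A]/Z_{[K_A,K_A]}$.

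The main obstacle is the second step: producing the absorbing ideal in the semiprime (rather than prime) setting, and controlling its Lie annihilator in $K_A/Z_{K_A}$. The identification $Z_{K_Q}\cap K_A=Z_{K_A}$ and the parallel identification for the derived algebras is the technical heart of the argument, since without it the embedding of graded subalgebras required to even state the conclusion is not well defined. Everything else is bookkeeping with the grading, which is made painless by the hypothesis that $\ast$ preserves every $A_\sigma$ and $Q_\sigma$.
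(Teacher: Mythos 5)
Your strategy---check that all the Lie algebras involved are $G$-graded, establish the ungraded statement that $K_Q/Z_{K_Q}$ (resp.\ $[K_Q,K_Q]/Z_{[K_Q,K_Q]}$) is an algebra of quotients of $K_A/Z_{K_A}$ (resp.\ $[K_A,K_A]/Z_{[K_A,K_A]}$), and then upgrade via the unconditional implication (i)$\Rightarrow$(ii) of Proposition~\ref{quot and gr-quot}---is exactly the route the paper takes. The ``main obstacle'' you isolate (the semiprime ungraded version, including the identification of the centers) is not reproved in the paper but quoted directly as \cite[Theorem 1]{CabreraSanchez}, and the (graded) semiprimeness of $K_A/Z_{K_A}$ and $K_Q/Z_{K_Q}$ that the paper records before invoking the proposition is obtained from \cite[Theorem 6.1]{marmi} together with the fact that $Q$ inherits semiprimeness from $A$ by \cite[Lemma 2.1.9.(i)]{BMM}.
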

\begin{proof}
By \cite[Theorem 1]{CabreraSanchez}, $K_Q/Z_{K_Q}$ and $[K_Q,K_Q]/
Z_{[K_Q,K_Q]}$ are algebras of quotients of $K_A/Z_{K_A}$ and
$[K_A, K_A]/Z_{[K_A,K_A]}$, respectively.  Since $A$ is semiprime,
and so is $Q$ (by \cite[Lemma 2.1.9.(i)]{BMM}), it follows from
\cite[Theorem 6.1]{marmi} that $K_A/Z_{K_A}$ and $K_Q/Z_{K_Q}$ are
semiprime Lie algebras. In particular, they are graded semiprime,
hence Proposition \ref{quot and gr-quot} applies to get the
result.
\end{proof}

Note that for an arbitrary graded associative algebra $A$, the
graded Lie algebra $A^-$ is graded isomorphic to $K_{A \oplus
A^0}$ and hence $A^- / Z_A$ is  graded isomorphic to $K_{A \oplus
A^0}/Z_{K_{A \oplus A^0}}$, where $A^0$ denotes the opposite
algebra of $A$, and  $A \oplus A^0$ is endowed with the exchange
involution. This fact allows to obtain the following consequence
of the theorem before.

\begin{cor} Let $A$ be a semiprime graded
associative algebra and $Q$ be a graded subalgebra of $Q_s(A)$
containing $A$. Then
\begin{enumerate}[(i)]\itemsep=2mm
\item[\rm (i)] $Q^-/Z_{Q}$ is a graded  algebra of
quotients of $A^-/Z_{A}$.
 \item[\rm (ii)] $[Q,Q]/ Z_{[Q,Q]}$ is a graded algebra of quotients of $[A,A]/Z_{[A,A]}$.
\end{enumerate}
\end{cor}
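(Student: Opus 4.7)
The plan is to reduce both statements to Theorem \ref{algcocientegK} applied to the doubled algebra $B:=A\oplus A^0$ equipped with the exchange involution $(a,b)^\ast=(b,a)$ and the componentwise $G$-grading $B_\sigma=A_\sigma\oplus A_\sigma^0$, exactly as suggested in the paragraph preceding the corollary. One first checks routinely that $B$ is $G$-graded semiprime with $B_\sigma^\ast=B_\sigma$, that $K_B=\{(a,-a):a\in A\}$, and that the map $\varphi\colon A^-\to K_B$, $a\mapsto (a,-a)$, is an isomorphism of graded Lie algebras (this uses only the computation $[(a,-a),(c,-c)]=([a,c],-[a,c])$).

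The next step is the identification $Q_s(B)=Q_s(A)\oplus Q_s(A)^0$, a standard fact once one notes that the essential ideals of $B$ are precisely those of the form $I\oplus I^0$ for $I$ an essential ideal of $A$; the exchange involution extends to $Q_s(B)$ in the obvious way. Setting $\widetilde{Q}:=Q\oplus Q^0$ then yields a $G$-graded $\ast$-subalgebra of $Q_s(B)$ that contains $B$ and satisfies $\widetilde{Q}_\sigma^\ast=\widetilde{Q}_\sigma$. Applying Theorem \ref{algcocientegK} to the pair $(B,\widetilde{Q})$ delivers that $K_{\widetilde{Q}}/Z_{K_{\widetilde{Q}}}$ is a graded algebra of quotients of $K_B/Z_{K_B}$, and similarly $[K_{\widetilde{Q}},K_{\widetilde{Q}}]/Z_{[K_{\widetilde{Q}},K_{\widetilde{Q}}]}$ is a graded algebra of quotients of $[K_B,K_B]/Z_{[K_B,K_B]}$.

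Finally, transport these conclusions through $\varphi$ and its natural extension $\widetilde{\varphi}\colon Q^-\to K_{\widetilde{Q}}$, $q\mapsto(q,-q)$. The identifications $\widetilde{\varphi}(A^-)=K_B$ and $\widetilde{\varphi}([Q,Q])=[K_{\widetilde{Q}},K_{\widetilde{Q}}]$ are immediate from the bracket formula above. The decisive point is the matching of Lie centers $\widetilde{\varphi}(Z_Q)=Z_{K_{\widetilde{Q}}}$ and $\widetilde{\varphi}(Z_{[Q,Q]})=Z_{[K_{\widetilde{Q}},K_{\widetilde{Q}}]}$ (and the analogous equalities over $A$); both hold because $(z,-z)$ commutes with every $(q,-q)$ in $\widetilde{Q}$ if and only if $[z,q]=0$ for all $q\in Q$. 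With this in place, $\widetilde{\varphi}$ induces graded Lie isomorphisms $Q^-/Z_Q\cong K_{\widetilde{Q}}/Z_{K_{\widetilde{Q}}}$ and $[Q,Q]/Z_{[Q,Q]}\cong [K_{\widetilde{Q}},K_{\widetilde{Q}}]/Z_{[K_{\widetilde{Q}},K_{\widetilde{Q}}]}$, each restricting to the analogous isomorphism over $A$, and (i) and (ii) follow at once. The main obstacle is the verification of the matching of centers and of the decomposition of $Q_s(B)$; both are routine, so the corollary is essentially a repackaging of Theorem \ref{algcocientegK} via the exchange-involution trick.
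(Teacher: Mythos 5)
Your proposal is correct and follows exactly the route the paper indicates: the paragraph preceding the corollary identifies $A^-$ with $K_{A\oplus A^0}$ under the exchange involution and derives the statement from Theorem~\ref{algcocientegK}, which is precisely your argument (the paper merely omits the routine verifications of the grading, the decomposition $Q_s(A\oplus A^0)=Q_s(A)\oplus Q_s(A)^0$, and the matching of centers that you spell out). No discrepancy to report.
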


\smallskip

Following the construction given in \cite{msm} of the maximal
algebra of quotients of a semiprime Lie algebra, it is possible to build a
maximal graded algebra of quotients for every graded semiprime Lie
algebra. Taking into account that the elements of the maximal algebra of quotients of a semiprime Lie algebra arise from derivations defined on essential ideals, it seems natural to consider instead graded derivations defined on graded essential ideals. With this idea in mind, we proceed to introduce a new graded algebra of quotients.

Let $I$ be an ideal of a  Lie algebra $L$. A linear map
$\delta:I\rightarrow L$ is said to be a {\it derivation of
L} if $$\delta([x,\, y])=[\delta x,\, y]+[x,\,\delta y]$$ for
every $x,\, y\in I$, and $\der(I, L)$ will stand for the set of  all derivations
from $I$ into $L$. Suppose now that the Lie algebra $L$ is graded by a group $G$, and
that $I$ is a graded ideal of $L$. We say that a derivation $\delta$ has {\it degree} $\sigma \in G$ if it satisfies
$\delta (I_\tau)\subseteq L_{\tau \sigma}$ for every $\tau\in G$.
In this case, $\delta$ is called a {\it graded derivation
of degree} $\sigma$. Denote by $\dergr (I,\, L)_\sigma$ the set
of all graded derivations of degree $\sigma$. Clearly, it becomes a $\Phi$-module by defining operations in
the natural way and, consequently, $\dergr
(I,\,L):=\oplus_{\sigma\in G}\dergr (I,\, L)_\sigma $ is also a
$\Phi$-module.

For any element $x$ in a Lie algebra $L$, the
adjoint map $\Ad x:I\rightarrow L$ defined by $\Ad x (y)=[x,\,y]$
is a derivation of $L$. If $L$ is graded by a group $G$
and $x$ is a homogeneous element of degree $\sigma$,
 then $\Ad x$ is in fact a derivation of degree
$\sigma$. In general, for any $x$ in the graded Lie algebra $L$,
 $$\Ad x=\sum_{\sigma\in G}
\Ad x_\sigma \in\bigoplus_{\sigma\in G}  \dergr (I,\, L)_\sigma
= \dergr (I,\,L).$$

\smallskip

In order to ease the notation, denote by $\Igre (L)$ the set of
all graded essential ideals of a graded Lie algebra $L$. If $L$ is a $G$-graded semiprime Lie algebra, it can be shown (as in \cite[Theorem 3.4]{msm}) that the direct limit 
$$Q_{gr-m}(L):={\underset {I\in \Igre (L)}{\underrightarrow{\limit}}}{\dergr (I,\, L)}$$
 of graded derivations of $L$ defined on graded essential ideals of $L$ is a graded algebra of quotients of $L$ containing $L$ as a graded subalgebra, via the following graded Lie monomorphism: $$\begin{matrix}
\varphi: & L & \to & Q_{gr-m}(L)&\cr & x & \mapsto & (\Ad x)_L
\end{matrix}$$
\noindent where $\delta_I$ stands for any arbitrary element of $Q_{gr-m}(L)$.

Moreover, $Q_{gr-m}(L)$ is maximal among the
graded algebras of quotients of $L$. It is called the {\it maximal graded algebra of quotients} of $L$. The following result characterizes it. We omit its proof because it is similar to that of \cite[Theorem 3.8]{msm}.

\begin{thm} \label{axiomatic charact}
Let $L$ be a graded semiprime Lie algebra and consider a graded
overalgebra $S$ of $L$. Then $S$ is graded isomorphic to
$Q_{gr-m}(L)$, under an isomorphism which is the identity on $L$,
if and only if $S$ satisfies the following properties:
\begin{enumerate}

\item[\rm{(i)}] For any $s_\sigma \in S_\sigma$ ($\sigma \in G$) there exists
$I\in \Igre (L)$ such that $[I,\, s_\sigma]\subseteq L$.
\item[\rm{(ii)}] For $s_\sigma \in S_\sigma$  ($\sigma \in G$) and $I\in \Igre (L)$,
$[I,\, s_\sigma]=0$ implies $s_\sigma=0$.
\item[\rm{(iii)}] For $I\in \Igre (L)$ and $\delta\in \pdergr (I,\,
L)_\sigma$ ($\sigma \in G$) there exists $s_\sigma\in S_\sigma$
such that $\delta (x)=[s_\sigma,\, x]$ for every $x\in I$.
\end{enumerate}

\end{thm}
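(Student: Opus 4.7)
The plan is to adapt the proof of \cite[Theorem 3.8]{msm} to the graded setting, splitting the argument into the two implications. First I would verify that $Q:=Q_{gr-m}(L)$ itself satisfies \textup{(i)}--\textup{(iii)}. By construction, any homogeneous element $s_\sigma\in Q_\sigma$ is represented by some graded derivation $\delta\colon I\to L$ of degree $\sigma$ with $I\in \Igre(L)$, and then $[I,s_\sigma]=\delta(I)\subseteq L$, giving \textup{(i)}. For \textup{(ii)}, if $[I,s_\sigma]=0$ for some $I\in \Igre(L)$ and $\delta\colon J\to L$ represents $s_\sigma$, then $\delta$ vanishes on the graded essential ideal $I\cap J$, so the direct-limit class of $\delta$ is zero; here graded semiprimeness of $L$ together with Lemma~\ref{gr-ann}\textup{(iii)} is exactly what makes the direct-limit argument work. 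Property \textup{(iii)} is immediate from the definition of the direct limit.

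For the converse, assume $S$ satisfies \textup{(i)}--\textup{(iii)}. I would construct a graded $\Phi$-linear map $\Phi\colon S\to Q_{gr-m}(L)$ that is the identity on $L$, as follows. For homogeneous $s_\sigma\in S_\sigma$, choose by \textup{(i)} some $I\in \Igre(L)$ with $[I,s_\sigma]\subseteq L$; then $\ad{s_\sigma}|_I\colon I\to L$ is a graded derivation of degree $\sigma$, and one sets $\Phi(s_\sigma)$ to be its class in $Q_{gr-m}(L)$. Extend to arbitrary $s\in S$ by acting on each homogeneous component. Well-definedness (independence of the choice of $I$) uses the direct-limit structure: two choices $I,I'$ give derivations that agree on the graded essential ideal $I\cap I'$, hence the same class. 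The bracket compatibility $\Phi([s_\sigma,t_\tau])=[\Phi(s_\sigma),\Phi(t_\tau)]$ reduces to the identity $\ad{[s_\sigma,t_\tau]}=[\ad{s_\sigma},\ad{t_\tau}]$ restricted to a graded essential ideal contained in both domains, which is a direct calculation via the Jacobi identity.

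It then remains to check that $\Phi$ is a graded isomorphism that restricts to the identity on $L$. Grading preservation is built into the definition. For injectivity, if $\Phi(s_\sigma)=0$, then $\ad{s_\sigma}|_I=0$ for some $I\in \Igre(L)$, i.e.\ $[I,s_\sigma]=0$, so $s_\sigma=0$ by \textup{(ii)}; this extends to arbitrary elements via the homogeneous decomposition. Surjectivity is the direct translation of \textup{(iii)}: every generator $[\delta]$ of $Q_{gr-m}(L)$, with $\delta\in \pdergr(I,L)_\sigma$, coincides with $\ad{s_\sigma}|_I$ for the $s_\sigma$ furnished by \textup{(iii)}, so $\Phi(s_\sigma)=[\delta]$. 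Finally, for $x\in L$ we have $[L,x]\subseteq L$, so $\Phi(x)$ is the class of $\ad{x}|_L$, which is exactly the image of $x$ under the canonical monomorphism $\varphi$; hence $\Phi$ is the identity on $L$ under the standard identifications.

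The main obstacle I anticipate is the well-definedness of $\Phi$ together with the verification that its image lies in the ``correct'' copy of $Q_{gr-m}(L)$, since the direct-limit class of $\ad{s_\sigma}|_I$ is sensitive to how one restricts to smaller graded essential ideals; this step relies crucially on the fact, used also in the forward direction, that graded essential ideals of a graded semiprime $L$ are closed under finite intersection and have trivial annihilator (Lemma~\ref{gr-ann}). Everything else is a routine translation of the non-graded argument from \cite{msm}, with the homogeneous decomposition allowing the passage between $S_\sigma$ and general elements of $S$.
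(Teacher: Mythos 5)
Your proposal is correct and follows essentially the same route as the paper, which omits the proof precisely because it is the graded adaptation of \cite[Theorem~3.8]{msm}: one checks that $Q_{gr-m}(L)$ satisfies (i)--(iii), and conversely builds the isomorphism $s_\sigma\mapsto$ class of $\Ad s_\sigma$ restricted to a graded essential ideal, with well-definedness, injectivity and surjectivity coming from the direct-limit structure and properties (ii) and (iii). The only point stated loosely is the bracket compatibility, where the correct domain is a graded essential ideal such as $[I\cap J,\,I\cap J]$ (so that the compositions land back in the domains, not merely an ideal ``contained in both domains''), but this is the standard step from \cite{msm} and does not affect the argument.
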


\begin{rem}

{\rm Note the conditions (i) and (ii) in the theorem above are
equivalent to the following one:

(ii)$^\prime$ $S$ is a graded algebra of quotients of $L$.}

\end{rem}

\begin{rem}

{\rm The notion of maximal graded algebra of quotients extends
that of maximal algebra of quotients given in \cite{msm} as the
maximal graded algebra of quotients and the maximal algebra of
quotients of a semiprime Lie algebra coincide when considering the
trivial grading over such an algebra.}

\end{rem}

\section{Maximal graded algebras of quotients of
3-graded Lie algebras}

Let $L$ be a $\Z$-graded Lie algebra with a finite grading. We may
write $L=\oplus^{n}_{k=-n}L_k$ and we will say that $L$ has a
$(2n+1)$-{\it grading}. In what follows, we will deal with 3-graded
Lie algebras.

In this section we show that for a 3-graded semiprime Lie algebra
$L$, the maximal algebra of quotients of $L$ is 3-graded too and
coincides with the maximal graded algebra of quotients of $L$, as
defined in Section 4.

\begin{lem}\label{gradedideal}

Let $L=L_{-1}\oplus L_0\oplus L_1$ be a 3-graded Lie algebra and
$I$ an ideal of $L$. Denote by $\pi_i$ the canonical projection
from  $L$ into $L_i$ (with $i\in \{-1,\, 0,\,1\}$) and consider
$\tilde{I}:=J+\pi_{-1}(J)+\pi_1(J)$, where $J:=[[I, \,
I],\,[I,\,I]]$. Then:
\begin {enumerate}[(i)]\itemsep=2mm
\item[\rm{(i)}] $\tilde{I}$ is a graded ideal of $L$ contained in $I$.
\end{enumerate}
\noindent If moreover $L$ is semiprime, then:
\begin {enumerate}[(i)]\itemsep=2mm
\item[\rm{(ii)}] $I$ is an essential ideal of $L$ if and only if $\tilde{I}$ is an essential ideal of $L$.
\item[\rm{(iii)}] Suppose that $I$ is a graded ideal. Then $I$ is an essential ideal of $L$ if and only if it
 is a graded essential ideal of $L$.
\end{enumerate}
\end{lem}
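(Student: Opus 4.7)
The plan is to prove (i) via a commutator identity linking $\pi_{\pm 1}$ to the grading derivation of $L$, and then to derive (ii) and (iii) from (i) together with an iterated application of semiprimeness. Let $D\colon L \to L$ be the grading derivation defined by $Dx = \pi_1(x) - \pi_{-1}(x)$; it is a derivation of $L$. Using the relations $[L_1, L_1] = [L_{-1}, L_{-1}] = 0$ coming from the 3-grading, a direct comparison of homogeneous components yields the identity
\[
\pi_{\pm 1}[u, v] = [\pi_{\pm 1} u,\, v] + [u,\, \pi_{\pm 1} v] + [Du,\, Dv]
\]
for all $u, v \in L$, valid over any ring of scalars.

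To obtain $\pi_{\pm 1}(J) \subseteq I$, I would apply this identity with $u = [a, b]$ and $v = [c, d]$ for $a, b, c, d \in I$. The first two summands lie in $[L, I] \subseteq I$ since $[a,b], [c,d] \in I$. For the third, the Leibniz rule gives $D[a, b] = [Da, b] + [a, Db]$ and similarly for $D[c,d]$, so $[Du, Dv]$ is a sum of four brackets $[[X, Y], [Z, W]]$ in which each inner bracket has at least one factor in $\{a, b\} \subseteq I$ (respectively $\{c, d\} \subseteq I$), placing it in $I$; hence each outer bracket lies in $[I, I] \subseteq I$. This gives $\tilde I \subseteq I$. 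That $\tilde I$ is graded follows from $\pi_0(j) = j - \pi_{-1}(j) - \pi_1(j) \in \tilde I$ for $j \in J$, so $\tilde I = \sum_{i=-1}^{1} \pi_i(J)$. It is an ideal because $J$ is one and, for homogeneous $x \in L_m$ and $j \in J$, the bracket $[x, \pi_k(j)]$ equals the degree-$(m+k)$ component of $[x, j] \in J$, hence lies in $\pi_{m+k}(J) \subseteq \tilde I$ (or is zero when $m+k \notin \{-1,0,1\}$).

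For (ii) and (iii), the main ingredient is that in a semiprime Lie algebra, the second derived ideal of any nonzero ideal $N$ is nonzero: $[N, N]$ is a nonzero ideal, and semiprimeness applied to it yields $[[N, N], [N, N]] \neq 0$. For (ii), given a nonzero ideal $K$ and assuming $I$ essential, take $N = I \cap K$; then $[[I \cap K, I \cap K], [I \cap K, I \cap K]]$ is a nonzero subspace of $J \cap K \subseteq \tilde I \cap K$, and the reverse implication is immediate from $\tilde I \subseteq I$. For (iii), the forward direction is trivial; conversely, given a nonzero ideal $K$, applying (i) to $K$ produces a graded ideal $\widetilde K \subseteq K$ containing $[[K, K], [K, K]] \neq 0$, so graded essentiality of $I$ gives $I \cap K \supseteq I \cap \widetilde K \neq 0$.

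The main obstacle is uncovering the commutator identity for $\pi_{\pm 1}$ above. A brute-force expansion of $\pi_1([[a,b], [c,d]])$ produces a sum of twelve terms $[[a_i, b_j], [c_k, d_l]]$ whose individual inner entries $a_i, b_j, c_k, d_l$ need not belong to $I$, and it is not apparent how to regroup them so as to land inside $I$ without the structural rewriting provided by $D$ and the Leibniz rule.
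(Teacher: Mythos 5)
Your proof is correct, and its overall architecture matches the paper's: prove $\pi_{\pm 1}(J)\subseteq I$ using the grading derivation, deduce that $\tilde I$ is a graded ideal inside $I$, and get (ii) and (iii) by applying semiprimeness twice to obtain $[[N,N],[N,N]]\neq 0$ for any nonzero ideal $N$. The one genuinely different (and in fact sharper) step is the containment $\pi_{\pm 1}(J)\subseteq I$. The paper sets $\delta=\pi_1-\pi_{-1}$, notes $2\pi_{\pm 1}=\delta^2\pm\delta$, and verifies $\delta([I,I])\subseteq I$ and $\delta(J)\subseteq [I,I]$; strictly speaking this only yields $2\pi_{\pm 1}(J)\subseteq I$, so one needs $\frac12\in\Phi$ (or $2$-torsion freeness) to finish, a hypothesis not stated in the lemma, whose ambient ring is an arbitrary commutative unital $\Phi$. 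Your identity $\pi_{\pm 1}[u,v]=[\pi_{\pm 1}u,v]+[u,\pi_{\pm 1}v]+[Du,Dv]$ is correct (a check on homogeneous components, using $[L_{\pm 1},L_{\pm 1}]=0$, confirms it), and applying it with $u,v\in[I,I]$ together with the Leibniz rule $D[a,b]=[Da,b]+[a,Db]$ puts every summand in $I$ with integral coefficients; so your argument is valid over any $\Phi$. The remaining points --- gradedness of $\tilde I$ via $\pi_0=\mathrm{Id}-\pi_{-1}-\pi_1$, the ideal property by reading off homogeneous components of $[x,j]$ for $j\in J$, and the proofs of (ii) and (iii) --- are essentially identical to the paper's.
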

\begin{proof}

(i). Note that $\pi_0(J) \subseteq \tilde{I}$ since $\pi_0={\rm
Id}-\pi_{-1}-\pi_1$. Show first that $\tilde{I}$ is an ideal of
$L$: take $x\in \tilde{I}$ and $y\in L$ and write
$x=u+z_{-1}+t_1$, where $u$ and the elements $z=z_{-1}+z_0+z_1$
and $t=t_{-1}+t_0+t_1$ are in $J$. We have
\[ \label{grad}
\tag{\ensuremath{\dagger}} [x,\,y]=[u,\,y]+[z_{-1},\,y]+[t_1,\,y].
\]
Now, since $u$ is in $J$, which is an ideal of $L$, we obtain
$[u,\,y]\in J\subseteq \tilde{I}$. On the other hand, writing
$y=y_{-1}+y_0+y_1$ we have
$[z_{-1},\,y]=[z_{-1},\,y_0]+[z_{-1},\,y_1]$; apply again that $J$
is an ideal to obtain $[z,\, y_1],\, [z,\,y_0] \in J$, which
implies that the elements $[z,\, y_1]_0=[z_{-1},\,y_1]$ and
$[z,\,y_0]_{-1}=[z_{-1},\,y_0]$ are in $\tilde{I}$. Hence,
$[z_{-1},\,y]\in \tilde{I}$. Analogously, it can be shown
$[t_1,\,y]\in \tilde{I}$. Put together (\ref{grad}) and this to
obtain $[x,\,y]\in \tilde{I}$, as desired.

We claim that $\tilde{I}$ is in fact a graded ideal: consider $xx_{-1}+x_0+x_1\in \tilde{I}$ and write, as above,
$x=u+z_{-1}+t_1$, with $u$, $z=z_{-1}+z_0+z_1$ and
$t=t_{-1}+t_0+t_1$ elements in $J$. Then $x_{-1}=u_{-1}+z_{-1}$,
$x_0=u_0$ and $x_1=u_1+t_1$. Thus, taking into account the
definition of $\tilde{I}$ we obtain that $x_i\in \tilde{I}$ for
$i\in \{-1,0,1\}$.

 Finally, we prove that $\tilde{I}$ is contained in $I$ by showing that
$\pi_{-1}(J)$ and $\pi_1(J)$ are contained in $I$. Define
$\delta:=\pi_1-\pi_{-1}$. Then $\delta^2=\pi_{-1}+\pi_1$ implies
$2\pi_1=\delta^2+\delta$ and $2\pi_{-1}=\delta^2-\delta$. Hence,
to prove that $\pi_{-1}(J)$ and $\pi_1(J)$ are contained in $I$,
it is enough to check that $\delta^2(J)$ and $\delta(J)$ are
contained in $I$. Take $x,\, y\in I$ and write $x=x_{-1}+x_0+x_1$
and $y=y_{-1}+y_0+y_1$ where $x_i,\,y_i\in L_i$ for $i\in
\{-1,0,1\}$. A computation gives
\[
\begin{split}
&[x,\,y]_{-1}=[x_{-1},\,y_0]+[x_0,\,y_{-1}]=[x_{-1},\,y]-
[x_{-1},\,y_1]+[x,\,y_{-1}]-[x_1,\,y_{-1}]\\&
[x,\,y]_1=[x_0,\,y_1]+[x_1,\,y_0]=[x_1,\,y]-[x_1,\,y_{-1}]+[x,\,y_1]-[x_{-1},\,y_1].
\end{split}
\]
So
$\delta([x,\,y])=[x,\,y]_1-[x,\,y]_{-1}=[x_1,\,y]+[x,\,y_1]-[x_{-1},\,y]-[x,\,y_{-1}]\in
I$, that is, $\delta([I,\,I])\subseteq I$; it can be proved
analogously $\delta (J)\subseteq [I,\,I]\subseteq I$, therefore
$\delta^2(J)\subseteq \delta([I,\,I])\subseteq I$.

\medskip

(ii). Consider $I$ as an essential ideal of $L$. Note that the
semiprimeness of $L$ implies that $J$ is also an essential ideal
of $L$. Hence, $J\cap K\neq 0$ for any nonzero ideal $K$ of $L$
and so $\tilde{I}\cap K\neq 0$. This shows that $\tilde{I}$ is an
essential ideal of $L$.

To prove the converse, suppose that $\tilde{I}$ is an essential
ideal of $L$. As $\tilde{I} \subseteq I$ (by (i)), the ideal $I$
must be essential too.

\medskip
(iii). It is trivial that $I$ essential as an ideal implies $I$
essential as a graded ideal.

Suppose now that $I$ is a graded essential ideal and let $U$ be a
nonzero ideal of $L$. Being $L$ semiprime,
$K:=[[U,\,U],\,[U,\,U]]$ is a nonzero ideal of $L$. Apply (i) to
obtain that $\tilde{U}:=K+\pi_{-1}(K)+\pi_1(K)$ is a graded ideal
of $L$ contained in $U$. As $I$ is a graded essential ideal,
$I\cap \tilde{U}\neq 0$ and hence $I\cap U\neq 0$.
\end{proof}

\begin{thm} \label{isom between Q_m and Q_{gr-m}}
Let $L=L_{-1}\oplus L_0\oplus L_1$ be a semiprime 3-graded Lie
algebra. Then:
\begin{enumerate}[(i)]\itemsep=2mm
\item[\rm{(i)}]   $Q_m(L)$ is  graded and graded isomorphic to $Q_{gr-m}(L).$
\item[\rm{(ii)}] If $L$ is strongly nondegenerate and $\Phi$ is 2 and 3-torsion free, then $Q_m(L)$ is a
3-graded strongly nondegenerate Lie algebra.
\end{enumerate}
\end{thm}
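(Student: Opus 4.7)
The plan is to prove (i) by constructing a graded Lie isomorphism $\Psi\colon Q_{gr-m}(L)\to Q_m(L)$ which is the identity on $L$, using Lemma~\ref{gradedideal} both to bridge graded essential and essential ideals and to decompose arbitrary derivations into homogeneous components. For (ii) I would appeal to Proposition~\ref{semiprimeness} to obtain graded strong nondegeneracy, plus an additional argument to confine the support of the inherited grading to $\{-1,0,1\}$.

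For (i), Lemma~\ref{gradedideal}(iii) says a graded ideal of $L$ is essential if and only if it is graded essential, and Lemma~\ref{gradedideal}(i) produces a graded essential ideal $\tilde{I}$ inside every essential ideal $I$. Treating every graded derivation as a derivation yields, via the universal property of direct limits, a Lie homomorphism $\Psi\colon Q_{gr-m}(L)\to Q_m(L)$ which is the identity on $L$. Injectivity follows because a class in the kernel would be represented by a derivation vanishing on some essential $J$, hence, by Lemma~\ref{gradedideal}, on the graded essential $\tilde{J}$. For surjectivity, given $\delta\in\der(I,L)$, I would restrict to $\tilde{I}$ (without changing the class) and decompose $\delta|_{\tilde{I}}=\sum_{\sigma\in\{-2,\ldots,2\}}\delta_\sigma$ via $\delta_\sigma(x_j):=(\delta(x_j))_{j+\sigma}$ for $x_j\in\tilde{I}_j$; a homogeneous check of the Leibniz rule on pairs of homogeneous elements shows that each $\delta_\sigma\in\dergr(\tilde{I},L)_\sigma$, the sum being finite because $L$ has support $\{-1,0,1\}$. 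Transporting the grading of $Q_{gr-m}(L)$ through $\Psi$ then yields the desired graded isomorphism, and equivalently one can read the result off the axiomatic characterization Theorem~\ref{axiomatic charact}.

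For (ii), since $L$ is strongly nondegenerate it is a fortiori graded strongly nondegenerate, and Proposition~\ref{semiprimeness}, applicable because $\Phi$ is $2$- and $3$-torsion free, gives that the graded weak algebra of quotients $Q_{gr-m}(L)$ is graded strongly nondegenerate; by (i) the same holds for $Q_m(L)$. The 3-gradedness amounts to showing $Q_{gr-m}(L)_{\pm 2}=0$. A nonzero $\delta\in Q_{gr-m}(L)_2$ would be represented by a degree-$2$ graded derivation on a graded essential ideal $\tilde{I}$ satisfying $\delta(\tilde{I}_0)=\delta(\tilde{I}_1)=0$ and $\delta(\tilde{I}_{-1})\subseteq L_1$. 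Since $(\Ad\delta)^2$ raises degree by $4$ and $Q_{gr-m}(L)$ has support in $\{-2,\ldots,2\}$, the only candidate inputs for a nonzero image lie in $Q_{gr-m}(L)_{-2}$; graded strong nondegeneracy therefore forces $Q_{gr-m}(L)_{-2}\neq 0$. The plan is to combine this symmetric obstruction with the sharply restricted form of these extremal derivations and the axiomatic characterization of Theorem~\ref{axiomatic charact} to reach a contradiction, ruling out both degrees $\pm 2$.

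The main obstacle is precisely this final step of (ii): showing that degree-$\pm 2$ components of $Q_{gr-m}(L)$ really must vanish, since graded strong nondegeneracy alone permits their coexistence in principle, so one must leverage either the explicit form of such derivations (they are supported on $\tilde{I}_{\mp 1}$ and kill the rest) or a maximality argument against a 3-graded overalgebra $L\subseteq S\subseteq Q_m(L)$. By contrast, the decomposition of a general derivation into its graded components in the surjectivity step of (i) is routine, depending only on the finite grading of $L$ and a degree-by-degree verification of the Leibniz rule on homogeneous pairs.
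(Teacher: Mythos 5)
Your argument for part (i) is essentially the paper's: both proofs use Lemma~\ref{gradedideal} to pass between essential ideals $I$ and the graded essential ideals $\tilde{I}\subseteq I$, and both rest on the observation that a derivation defined on a graded essential ideal of the 3-graded algebra $L$ decomposes as a finite sum of graded derivations of degrees $-2,\dots,2$ (the paper phrases this as $\pder(\tilde I,L)=\oplus_{i=-2}^{2}\pdergr(\tilde I,L)_i$ and defines the map in the opposite direction, $\delta_I\mapsto\delta_{\tilde I}$, but the content is the same). That part is correct. Likewise, deducing the strong nondegeneracy of $Q_m(L)$ in (ii) from Proposition~\ref{semiprimeness} (which is where the $2$- and $3$-torsion-freeness of $\Phi$ is used) is what one should do.

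The genuine gap is the 3-gradedness claim in (ii), i.e.\ that the degree-$\pm 2$ components of $Q_{gr-m}(L)\cong Q_m(L)$ vanish. You identify this correctly as the crux, but what you offer is only a plan: you observe that a nonzero $\delta$ of degree $2$ is represented by a graded derivation killing $\tilde I_0$ and $\tilde I_1$ and sending $\tilde I_{-1}$ into $L_1$, and that $(\Ad\delta)^2$ can only be nonzero on the degree-$(-2)$ part, whence $Q_{gr-m}(L)_{-2}\neq 0$ --- but this is not a contradiction, and no further argument is supplied. This step is genuinely nontrivial: one must show, using strong nondegeneracy of $L$, that every degree-$2$ graded derivation on a graded essential ideal is zero (e.g.\ by exploiting the Leibniz identity $[\delta(x_{-1}),y_1]=\delta([x_{-1},y_1])=0$ for $x_{-1}\in\tilde I_{-1}$, $y_1\in\tilde I_1$, and then arguing that an element of $L_1$ annihilating $\tilde I_1$ in this fashion produces absolute zero divisors). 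The paper does not prove this internally either; it delegates exactly this point to an external result, Proposition~1.7 of Garc\'{\i}a--G\'omez Lozano \cite{gg}, which asserts that the relevant quotient algebra of a strongly nondegenerate $3$-graded Lie algebra is again $3$-graded. Without either citing such a result or carrying out that computation, your proof of (ii) is incomplete.
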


\begin{proof}
(i). Observe first that $L$, viewed as a 3-graded Lie algebra, is graded
semiprime (since it is semiprime), so it has sense to consider
$Q_{gr-m}(L)$.  

Define
$$\begin{matrix} \varphi: & Q_m(L) & \to & Q_{gr-m}(L)\cr & \delta_I & \mapsto & \delta_{\tilde{I}}

\end{matrix}$$
where for an essential ideal  $I$ of $L$, $\tilde{I}\subseteq I$
is the graded essential ideal defined in Lemma \ref{gradedideal}.

The Lie algebra $Q_m(L) $ is 5-graded: let $I$ be an essential ideal of $L$ and
$\tilde{I}$ as before. It is easy to check, by
considering the canonical projections onto the subspaces $\tilde{I}_i$
($i\in \{-1,0,1\}$), that $\pder (\tilde{I},\,L)$ is just
$\oplus_{i=-2}^2\pdergr (\tilde{I},\,L)_i$, which coincides, by
definition, with $\pdergr (\tilde{I},\, L)$. This shows our claim and that
the map $\varphi$ is well-defined.
Finally, keeping in mind Lemma \ref{gradedideal} (iii) it is straightforward to verify that $\varphi$ is a graded Lie
algebra isomorphism. 

(ii). Apply (i) and \cite[Proposition 1.7]{gg}.
\end{proof}

\section{Jordan pairs of
quotients and 3-graded Lie algebras of quotients}

Our target in this section is to analyze the relationship between
the notions of Jordan pair of quotients 
and of (graded) Lie algebra of quotients, via the
Tits-Kantor-Koecher construction. In the particular case of
maximal quotients,we will prove that
under the suitable hypothesis, the maximal Lie algebra
of quotients of  the TKK-algebra of a Jordan pair  $V$ is the
TKK-algebra of the maximal Jordan pair of quotients of $V$.

A {\it Jordan pair over} $\Phi$ is a pair $V=(V^+,\,V^-)$ of
$\Phi$-modules together with a pair $(Q^+,\,Q^-)$ of quadratics
maps $Q^\sigma:V^\sigma\rightarrow \Hom (V^{-\sigma},\,V^\sigma)$
(for $ \sigma=\pm$) with linearizations denoted by
$Q^\sigma_{x,z}\,y =\{x,\,y,\,z\}=D^\sigma_{x,y}z$, where
$Q^\sigma_{x,z}=Q^\sigma_{x+z}-Q^\sigma_x-Q^\sigma_z$, satisfying
the following identities in all the scalar extensions of  $\Phi$:

\begin{enumerate}\itemsep=2mm
\item[(i)] $D^\sigma_{x,y}\,Q^\sigma_x=Q^\sigma_x\,D^{-\sigma}_{y,x}$
\item[(ii)] $D^\sigma_{Q^\sigma_x y,\,y}=D^\sigma_{x,\,Q^{-\sigma}_y x}$
\item[(iii)] $Q^\sigma_{\,Q^\sigma_x y}=Q^\sigma_x Q^{-\sigma}_y Q^\sigma_x$
\end{enumerate}

for every $x\in V^\sigma$ and $y\in V^{-\sigma}$.

From now on, we shall deal with Jordan pairs $V=(V^+,\,V^-)$ over
a ring of scalars $\Phi$ containing $\frac{1}{2}$.  In order to
ease the notation, Jordan products will be denoted by $Q_x y$, for
any $x\in V^\sigma$, $y\in V^{-\sigma}$. The reader is referred to
\cite{loos} for basic results, notation and terminology on Jordan
pairs. Nevertheless, we recall here some notions and basic
properties.

Let $V=(V^+,\,V^-)$ be a Jordan pair. An element $x\in V^\sigma$
is called an  \emph{absolute zero divisor} if $Q_x=0$, while the
pair $V$ is said to be \emph{strongly nondegenerate}
(\emph{nondegenerate} in the terminology of \cite{gg}) if it has
no nonzero absolute zero divisors. The pair $V$ is
{\emph{semiprime}} if $Q_{I^\pm}I^{\mp}=0$ imply $I=0$, being $I$
an ideal of $V$, and is called {\it prime} if $Q_{I^\pm}J^{\mp}=0$
imply $I=0$ or $J=0$, for $I$ and $J$ ideals of $V$. A {\it
strongly prime} pair is a prime and strongly nondegenerate pair.

For a subset $X=(X^+,\,X^-)$ of $V$, the annihilator of $X$ in $V$
is $\Ann _V (X)=(\Ann _V (X)^+,$ $\,\Ann _V (X)^-)$, where, for
$\sigma= \pm$
$$\Ann _V (X)^\sigma=\{z\in V^\sigma \mid \{z,\,X^{-\sigma},\,
V^\sigma\}= \{z,\,V^{-\sigma},\, X^\sigma\}=\{V^{-\sigma},\,z,\,
X^{-\sigma}\}=0\}.$$ One can check that $\Ann _V(I)$ is an ideal
of $V$ if $I$ is so. Ideals of Lie algebras having zero
annihilator are essentials and when the Lie algebra where they
live is semiprime, the reverse holds, i.e., every essential ideal
has zero annihilator (see \cite[Lemma 1.2]{msm}). In the context
of Jordan pairs, a similar result can be shown.

\begin{lem} \label{essential=sturdy}
Let $I=(I^+,\,I^-)$ be an ideal of a semiprime Jordan pair
$V=(V^+,\,V^-)$. Then:
\begin{enumerate}
\item[\rm{(i)}] $I\cap \Ann _V(I)=0$.
\item[\rm{(ii)}] $I$ is an essential ideal of $V$ if and only if
$\Ann _V(I)=0$.
\end{enumerate}
\end{lem}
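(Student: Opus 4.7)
The plan is to adapt the Lie-algebra argument of \cite[Lemma 1.2]{msm} to the Jordan pair setting, using the three defining conditions of $\Ann_V(X)^\sigma$ in place of the single bracket condition available for Lie algebras.

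For (i), I would set $K:=I\cap\Ann_V(I)$, which is automatically an ideal of $V$ as the intersection of two ideals. The goal is then to apply semiprimeness to $K$, so I need $Q_{K^\sigma}K^{-\sigma}=0$ for $\sigma=\pm$. Given $z\in K^\sigma$ and $w\in K^{-\sigma}$, the condition $z\in\Ann_V(I)^\sigma$ forces $\{z,I^{-\sigma},V^\sigma\}=0$; specializing the middle argument to $w\in I^{-\sigma}$ and the third to $z\in V^\sigma$ yields $\{z,w,z\}=0$. Since $\tfrac12\in\Phi$ and $\{z,w,z\}=2Q_z w$, this gives $Q_z w=0$. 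Hence $Q_{K^\pm}K^\mp=0$ and semiprimeness forces $K=0$.

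For (ii), the direct implication is immediate: if $I$ is essential, then the ideal $\Ann_V(I)$ intersects $I$ trivially by (i), so $\Ann_V(I)=0$. For the converse, I would argue by contradiction: assuming $\Ann_V(I)=0$, let $J$ be a nonzero ideal with $I\cap J=0$, which means $I^\sigma\cap J^\sigma=0$ for both signs. The strategy is then to show $J\subseteq\Ann_V(I)$: for each $z\in J^\sigma$, verify that the three triples
\[
\{z,I^{-\sigma},V^\sigma\},\qquad \{z,V^{-\sigma},I^\sigma\},\qquad \{V^{-\sigma},z,I^{-\sigma}\}
\]
vanish, by observing that each one sits simultaneously in the appropriate component of $I$ (because $I$ is an ideal) and of $J$ (because $z\in J$ and $J$ is an ideal), hence in the trivial intersection. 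This forces $J\subseteq\Ann_V(I)=0$, contradicting $J\neq0$.

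The only genuine technical point is the bookkeeping in the converse of (ii): one must invoke the correct ideal-absorption identity ($\{V,V,I\}\subseteq I$ or $\{V,I,V\}\subseteq I$, and analogously for $J$) in each of the three slots to land the triple in both $I$ and $J$ at once. Beyond that, both parts are essentially direct translations of the Lie-algebra proof, with the Jordan triple $\{z,w,z\}$ replacing the Lie bracket $[x,[x,y]]$ in the semiprimeness step of (i).
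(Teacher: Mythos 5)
Your proposal is correct and follows essentially the same route as the paper's own proof: part (i) reduces to $Q_{K^\pm}K^{\mp}=0$ for $K=I\cap \Ann_V(I)$ via the vanishing of the triple product $\{z,w,z\}$ and semiprimeness, and the converse in part (ii) places the three defining triples of the annihilator inside $I\cap J=0$ using that both are ideals. The only cosmetic difference is that you phrase the converse as a contradiction while the paper directly shows any ideal meeting $I$ trivially lies in $\Ann_V(I)$.
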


\begin{proof}
(i). If we show that the ideal $K:=I\cap \Ann _V(I)$ satisfies
that $Q_{K^\pm}K^{\mp}=0$, for $K^\sigma=I^\sigma\cap
\Ann_V(I)^\sigma$, $\sigma= \pm$, the result follows by the
semiprimeness of $V$. Given $x\in K^{\sigma}\subseteq \Ann
_V(I)^{\sigma}$ for $\sigma=\pm$ we have
$\{x,\,K^{-\sigma},\,V^{\sigma}\}=0$ since $K^{-\sigma}\subseteq
I^{-\sigma}$. So $\{K^{\sigma},\,K^{-\sigma},\,V^{\sigma}\}=0$ for
$\sigma=\pm$ and hence $Q_{K^\pm}K^{\mp}=0$, as desired.

(ii). Consider an essential ideal $I=(I^+,\,I^-)$ of $V$; then
$I\cap\Ann_V(I)=0$ by (i), and by the essentiality, $\Ann
_V(I)=0$. Conversely, suppose that $\Ann_V(I)=0$ and consider  an
ideal $K=(K^+,\,K^-)$ of $V$ satisfying $I\cap K=0$.

For $x\in K^\sigma$, with $\sigma=\pm$, and taking into account
that $I$ and $K$ are ideals of $V$, we obtain
$$\{x,\,I^{-\sigma},\,V^\sigma\},
\{x,\,V^{-\sigma},\,I^\sigma\}, \{V^{-\sigma},\,x,\,I^{-\sigma}\}
\subseteq I\cap K=0,$$

hence, $K\subseteq \Ann _V(I)=0$. This shows that $I$ is an
essential ideal of $V$.
\end{proof}

Let us recall the connection between Jordan 3-graded Lie algebras
and Jordan pairs.

A 3-graded Lie algebra $L=L_{-1}\oplus L_0\oplus L_1$ is called
\emph{Jordan 3-graded} if $[L_1,\,L_{-1}]=L_0$ and there exists a
Jordan pair structure on $(L_1,\,L_{-1})$ whose Jordan product is
related to the Lie product by $\{x,\,y,\,z\}=[[x,\,y],\,z]$, for
any $x,\,z\in L_\sigma, \, y\in L_{-\sigma}, \, \sigma=\pm$. In
this case, $V=(L_1,\,L_{-1})$ is called the \emph{associated
Jordan pair}.

Since $\frac{1}{2} \in \Phi$, the product on the associated Jordan
pair is unique and given by $Q_x
y=\frac{1}{2}\{x,\,y,\,x\}=\frac{1}{2}[[x,\,y],\,x]$. Conversely,
for any 3-graded Lie algebra, the formula above defines a pair
structure on $(L_1,\,L_{-1})$ whenever $\frac{1}{6}\in \Phi$ (see
\cite[1.2]{neher}).

One important example of a Jordan 3-graded Lie algebra is the
$\tkk$-algebra of a Jordan pair. It is built as explained below.

Let $V=(V^+,\,V^-)$ be a Jordan pair; a pair $(\delta^+,\,
\delta^{-}) \in \End_\Phi (V^+) \times \End_\Phi (V^-)$ is a {\it
derivation} of $V$ if it satisfies
$$\delta^\sigma(\{x,\,y,\,z\})=\{\delta^\sigma(x),\,y,\,z\}+
\{x,\,\delta^{-\sigma}(y),\,z\}+\{x,\,y,\,\delta^\sigma(z)\}$$
for any $x,\,z \in V^\sigma$ and $y\in V^{-\sigma}$, $\sigma=\pm$.
For $(x,\,y)\in V$ the map $\delta(x,\,y):=(D_{x,y},\,-D_{y,x})$
is a derivation of $V$ (by the identity (JP12) in \cite{loos})
called {\it inner derivation}. Denote by $\ider (V)$ the
$\Phi$-module spanned by all inner derivations of $V$ and define
on the $\Phi$-module $\tkk (V):=V^+\oplus \ider (V) \oplus V^-$
the following product:

$$
\begin{aligned}
 { [x^+\oplus \,\gamma \,\oplus
\,x^-,\,y^+\oplus\, \mu\,\oplus \, y^-] =} &
(\gamma_+x^+-\mu_+x^+)\,\oplus\,
(\,[\gamma,\,\mu]\,+\,\delta(x^+,\,y^-)\cr
 -&\,\delta(y^+,\,x^-)\,)\, \oplus\, (\gamma_-y^-\mu_-x^-),
\end{aligned}
$$ where $x^\sigma$, $y^\sigma \in V^\sigma$ and
$\gamma=(\gamma_+,\,\gamma_-),\,\mu=(\mu_+,\,\mu_-)\in \ider (V)$.
Then, it can be proved that $\tkk (V)$ becomes a Lie algebra (see
e.g. \cite{Mey}). As this construction has its origin in the
fundamental papers \cite{kantor64, kantor67, kantor72} by Kantor,
in \cite{koecher67, koecher68} by Koecher and in \cite{tits} by
Tits, $\tkk (V)$ is called the \emph{Tits-Kantor-Koecher algebra
of} $V$ or the $\tkk$-\emph{algebra} for short. It is easy to
check that the following provides $\tkk (V)$ with a 3-grading:
$$\tkk(V)_1=V^+,\,\,\tkk(V)_0=\ider (V),\,\,\tkk(V)_{-1}=V^-.$$
Moreover, $\tkk (V)$ is a Jordan 3-graded Lie algebra with $V$ as
associated Jordan pair.

If $L$ is a Jordan 3-graded Lie algebra with associated Jordan
pair $V$, then the $\tkk$-algebra associated to $V$ is not in
general isomorphic to $L$. Rather, we have:

\begin{lem} \label{neher93}
{\rm (\cite[2.8]{neher93})}. Let $L$ be a Jordan 3-graded Lie
algebra with associated Jordan pair $V$. Then $\tkk (V)\cong
L/C_V$, where $C_V=\{x\in L_0\mid
[x,\,L_1]=0=[x,\,L_{-1}]\}=Z(L)\cap L_0$.
\end{lem}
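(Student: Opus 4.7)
The plan is to construct an explicit surjective Lie algebra homomorphism $\Phi\colon L \to \tkk(V)$ whose kernel is exactly $C_V$. I would define $\Phi$ to be the identity on $L_1 = V^+$ and on $L_{-1} = V^-$; since the Jordan 3-grading hypothesis gives $L_0 = [L_1, L_{-1}]$, any $\ell \in L_0$ can be written as $\ell = \sum_i [x_i, y_i]$ with $x_i \in L_1$, $y_i \in L_{-1}$, and I would set $\Phi(\ell) := \sum_i \delta(x_i, y_i) \in \ider(V)$.

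The first thing to verify would be that this prescription on $L_0$ is well-defined. The key observation I would use is that $\delta(x,y) = (D_{x,y}, -D_{y,x})$ acts on $V = L_1 \oplus L_{-1}$ exactly as the restriction of $\Ad [x,y]$: for $z \in L_1$ one has $D_{x,y}(z) = \{x,y,z\} = [[x,y],z]$, while for $z \in L_{-1}$, $-D_{y,x}(z) = -\{y,x,z\} = -[[y,x],z] = [[x,y],z]$. Consequently, if $\sum_i [x_i, y_i] = 0$ in $L_0$ then $\sum_i \delta(x_i, y_i)$ vanishes on both $V^+$ and $V^-$, and hence is zero in $\ider(V) \subseteq \End_\Phi (V^+) \times \End_\Phi (V^-)$. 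A useful reformulation is that $\Phi$ restricted to $L_0$ coincides with $\ell \mapsto \Ad \ell|_V$.

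Next, I would check that $\Phi$ is a Lie algebra homomorphism by going through the nonzero bracket types. The equalities $[L_{\pm 1}, L_{\pm 1}] = 0$ match the corresponding zero brackets in $\tkk(V)$. For $x \in L_1$ and $y \in L_{-1}$, $\Phi([x,y]) = \delta(x,y)$ holds by definition and matches the $\tkk(V)$-bracket of $x \in V^+$ with $y \in V^-$. For $\ell \in L_0$ and $z \in L_{\pm 1}$, $\Phi([\ell, z]) = \Ad \ell(z)$ reproduces the action $(\Phi(\ell))_{\pm}(z)$ prescribed by $\tkk(V)$. For the $[L_0, L_0]$ case, since $\Phi|_{L_0} = \Ad|_V$ is the restriction of the Lie homomorphism $\Ad\colon L_0 \to \End_\Phi (V^+) \times \End_\Phi (V^-)$, it automatically preserves brackets.

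Finally, surjectivity and the kernel computation should fall out quickly. Surjectivity is clear because $V^+, V^-$ lie in the image and $\ider(V)$ is spanned by the elements $\delta(x,y) = \Phi([x,y])$. For the kernel, any $x \in \ker\Phi$ must have its $L_{\pm 1}$ components zero (since $\Phi$ is the identity there), so $\ker\Phi \subseteq L_0$; and $\ell \in L_0$ lies in $\ker\Phi$ iff $\Ad \ell|_V = 0$, i.e.\ $[\ell, L_1] = 0 = [\ell, L_{-1}]$, which is precisely the definition of $C_V$. The identification $C_V = Z(L) \cap L_0$ follows because any $\ell \in C_V$ additionally commutes with $L_0 = [L_1, L_{-1}]$ by the Jacobi identity, hence with the whole of $L$. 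The main subtlety I anticipate is just the bridge $\delta(x,y) \leftrightarrow \Ad [x,y]|_V$; once in place, every remaining verification is routine.
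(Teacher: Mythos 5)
Your proof is correct. Note, however, that the paper does not actually prove this lemma: it is quoted verbatim from Neher's work (the citation [2.8] of \emph{Generators and relations for 3-graded Lie algebras}), so there is no in-paper argument to compare yours against. The proof you give is the standard one and it is complete: the bridge $\delta(x,y)\leftrightarrow \Ad [x,y]\vert_{L_1\oplus L_{-1}}$ (using $-D_{y,x}z=[[x,y],z]$ on $L_{-1}$) settles well-definedness since elements of $\ider(V)\subseteq \End_\Phi(V^+)\times\End_\Phi(V^-)$ are determined by their action on $V$; the homomorphism check by bracket type, the surjectivity, and the identification of the (necessarily graded) kernel with $C_V=Z(L)\cap L_0$ via the Jacobi identity applied to $L_0=[L_1,L_{-1}]$ are all as they should be. One cosmetic remark: you denote the homomorphism by $\Phi$, which clashes with the paper's notation for the ring of scalars; a different letter would avoid confusion.
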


Now, we are going to show the equivalence between Jordan pairs of
quotients and Lie algebras of quotients of their respective
$\tkk$-algebras.

\begin{lem}\label{tkk filters}
Let $V$ be a semiprime Jordan pair, and $I=(I^+, I^-)$ an ideal of
$V$. Define by ${\rm Id}_{\tkk(V)}(I)= I^+\oplus
(\,[I^+,\,V^-]+[V^+,\,I^-]\,)\oplus I^-$ the graded ideal of $\tkk
(V)$ generated by $I$. Then $\ann{\tkk(V)}({\rm
Id}_{\tkk(V)}(I))=0$ if and only if $\ann V(I)=0$.
\end{lem}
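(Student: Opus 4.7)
The plan is to translate the Jordan-pair annihilator conditions defining $\ann V(I)^\pm$ into Lie-bracket conditions with $\tilde I:=\mathrm{Id}_{\tkk(V)}(I)$ in both directions, using the explicit $\tkk$ bracket — the action of $\ider(V)$ on $V^\pm$, the formula $[a,b]=\delta(a,b)=(D_{a,b},-D_{b,a})$ for $a\in V^+$, $b\in V^-$, and the triviality $[V^\pm,V^\pm]=0$ in a $3$-graded Lie algebra — together with the symmetry $\{x,y,z\}=\{z,y,x\}$ of the Jordan triple. As a preliminary observation, since $\tilde I$ is graded, Lemma \ref{gr-ann}(i) guarantees that $\ann{\tkk(V)}(\tilde I)$ is itself a graded ideal of $\tkk(V)$, so proving it vanishes reduces to showing that each of its homogeneous components does.

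For sufficiency, assuming $\ann V(I)=0$, I would take $x\in \ann{\tkk(V)}(\tilde I)$ homogeneous of degree $\sigma$. When $\sigma=1$ (so $x\in V^+$), expanding $[x,I^-]=0$ in $\ider(V)$ yields $D_{x,y}=0$ and $D_{y,x}=0$ for every $y\in I^-$; via the triple symmetry, this gives $\{x,I^-,V^+\}=0$ and $\{V^-,x,I^-\}=0$. Next, for $\mu=\delta(a,b)\in[I^+,V^-]$, the $V^+$-component of $[x,\mu]$ equals $-\mu_+(x)=-\{a,b,x\}$, whose vanishing produces $\{x,V^-,I^+\}=0$. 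These three are exactly the defining conditions of $\ann V(I)^+$, forcing $x=0$; the case $\sigma=-1$ is symmetric. For $\sigma=0$ and $\gamma\in\ider(V)\cap\ann{\tkk(V)}(\tilde I)$, the ideal property of $\ann{\tkk(V)}(\tilde I)$ gives $[\gamma,V^\pm]\subseteq\ann{\tkk(V)}(\tilde I)\cap L_{\pm1}=0$ by the previous step, so $\gamma_\pm$ vanishes on all of $V^\pm$ and hence $\gamma=0$.

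For necessity, assuming $\ann{\tkk(V)}(\tilde I)=0$, I would take $z\in\ann V(I)^+$, view it as an element of $L_1$, and verify that $z$ annihilates $\tilde I$ componentwise: $[z,I^+]\subseteq[L_1,L_1]=0$; for $y\in I^-$, the conditions on $z$ force $D_{z,y}=0$ on $V^+$ and $D_{y,z}=0$ on $V^-$ (using symmetry to rewrite $\{y,z,\cdot\}$ as $\{\cdot,z,y\}$), so $[z,y]=0$; and for $\mu=\delta(a,b)\in[I^+,V^-]$ or $\mu=\delta(c,d)\in[V^+,I^-]$, rewriting $\mu_+(z)$ by symmetry as $\{z,b,a\}$ or $\{z,d,c\}$ shows both vanish, using $\{z,V^-,I^+\}=0$ and $\{z,I^-,V^+\}=0$ respectively. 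Hence $z\in\ann{\tkk(V)}(\tilde I)=0$; the argument for $\ann V(I)^-$ is symmetric.

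The main subtlety I anticipate lies in the $L_0$-case of sufficiency: the naive conditions $[\gamma,I^\pm]=0$ only ensure that $\gamma_\pm$ vanishes on $I^\pm$, which is strictly weaker than $\gamma=0$. The trick is to exploit the fact that $\ann{\tkk(V)}(\tilde I)$ is an ideal of $\tkk(V)$ in order to push the vanishing of $\gamma_\pm$ from $I^\pm$ up to all of $V^\pm$, and this forces the pure $\sigma=\pm1$ components to be dispatched before the $L_0$-component.
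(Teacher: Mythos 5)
Your proof is correct. The paper itself gives no argument here: it simply cites \cite[Lemma 2.9]{gg}, so your direct verification is a genuinely self-contained alternative. The computation you propose is the natural one and all the steps check out against the $\tkk$ bracket: for homogeneous $x\in V^+$ annihilating ${\rm Id}_{\tkk(V)}(I)$, the conditions $[x,I^-]=0$ and $[x,[I^+,V^-]]=0$ unpack (via $[a,b]=(D_{a,b},-D_{b,a})$, the action $[\mu,x]=\mu_+(x)$, and the symmetry $Q_{x,z}=Q_{z,x}$) into exactly the three defining conditions of $\Ann_V(I)^+$, and conversely; and your treatment of the $L_0$ component is the one genuinely non-mechanical point, correctly resolved by using that $\Ann_{\tkk(V)}({\rm Id}_{\tkk(V)}(I))$ is a \emph{graded ideal} (Lemma \ref{gr-ann}(i)) to push $[\gamma,V^{\pm}]$ into the already-killed components $L_{\pm 1}$, whence $\gamma_{\pm}=0$ by the faithfulness of $\ider(V)\subseteq \End(V^+)\times\End(V^-)$. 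What your route buys, besides self-containedness, is the observation that semiprimeness of $V$ plays no role in this particular equivalence --- it is a pure translation of annihilator conditions --- whereas the citation keeps the paper shorter at the cost of sending the reader to \cite{gg}. One cosmetic caveat: the displayed $\tkk$ bracket in the paper contains typos (it should read $\gamma_+y^+-\mu_+x^+$ and $\gamma_-y^--\mu_-x^-$); your computation uses the correct formula.
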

\begin{proof}
See \cite[Lemma 2.9]{gg}.
\end{proof}

\smallskip

\begin{definition}{\rm (See \cite[2.5]{gg}). Let $V$ be a semiprime
Jordan pair contained in a Jordan pair $W$. It is said that $W$ is a
{\emph{pair of $\,\mathfrak{M}$-quotients}} of $V$ if for every
$0\neq q\in W^\sigma$ (with $ \sigma=\pm$) there exists an ideal $I$
of $V$ with $\ann V(I)=0$ such that $\{q,\,I^{-\sigma},\,V^\sigma
\}+\{q,\,V^{-\sigma},\,I^{\sigma}\}\subseteq V^\sigma$ and
$\{I^{-\sigma},\,q,\,V^{-\sigma}\}\subseteq V^{-\sigma}$, with
either $\{q,\,I^{-\sigma},\,V^\sigma
\}+\{q,\,V^{-\sigma},\,I^{\sigma}\}\neq 0$ or
$\{I^{-\sigma},\,q,\,V^{-\sigma}\}\neq 0$.}
\end{definition}

\begin{thm} \label{equiv Jordan pair and Lie quotients}
Let $V$ be a semiprime subpair of a Jordan pair $W$. Then the
following conditions are equivalent:
\begin{enumerate}
\item[\rm{(i)}] $W$ is a pair of $\,\mathfrak{M}$-quotients of $V$. \smallskip
\item[\rm{(ii)}] $\tkk(W)$ is an algebra of quotients of $\tkk
(V)$.
\end{enumerate}
\end{thm}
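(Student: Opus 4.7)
\bigskip

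The strategy is to exploit the 3-graded structure of the TKK-algebras. Since $V$ is semiprime, one verifies (via Lemma \ref{essential=sturdy} together with Lemma \ref{tkk filters} applied to $I=V$) that $\tkk(V)$ is a semiprime Lie algebra, hence in particular graded semiprime. By Proposition \ref{quot and gr-quot}, $\tkk(W)$ is an algebra of quotients of $\tkk(V)$ if and only if it is a graded algebra of quotients, so the whole argument can be carried out with the graded ideal-absorption characterization and only on homogeneous elements.

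For the implication (i) $\Rightarrow$ (ii), I would take a nonzero homogeneous $q\in \tkk(W)_\sigma$. If $\sigma=\pm 1$, so that $q\in W^\sigma$, the hypothesis furnishes an ideal $I=(I^+,I^-)$ of $V$ with $\ann V(I)=0$ and the Jordan absorption conditions. Setting $I':={\rm Id}_{\tkk(V)}(I)=I^+\oplus ([I^+,V^-]+[V^+,I^-])\oplus I^-$, Lemma \ref{tkk filters} gives $\ann{\tkk(V)}(I')=0$. A componentwise computation of $[I',q]$ via $\{x,y,z\}=[[x,y],z]$ shows that each bracket arising matches, up to sign, one of $\{q,I^{-\sigma},V^\sigma\}$, $\{q,V^{-\sigma},I^\sigma\}$ or $\{I^{-\sigma},q,V^{-\sigma}\}$; these lie in $\tkk(V)$ by hypothesis, and the ``either/or'' clause gives $[I',q]\neq 0$. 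If $\sigma=0$, write $q_0=\sum_{k=1}^n [a_k,b_k]$ with $a_k\in W^+$, $b_k\in W^-$, apply (i) to each $a_k$ and $b_k$, and intersect the resulting Jordan ideals; a finite intersection of essential ideals remains essential, producing a single ideal $I$ with $\ann V(I)=0$. The inclusion $[{\rm Id}_{\tkk(V)}(I),q_0]\subseteq \tkk(V)$ is then verified by Jacobi, re-expanding every bracket $[\cdot,q_0]$ through the decomposition of $q_0$ and reducing to Jordan triple products that $I$ absorbs.

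For the converse (ii) $\Rightarrow$ (i), let $0\neq q\in W^\sigma$ and view it in $\tkk(W)_\sigma$. The graded algebra-of-quotients hypothesis produces a graded essential ideal $I'$ of $\tkk(V)$ with $0\neq [I',q]\subseteq \tkk(V)$. The pair $I:=(I'_1,I'_{-1})$ is easily seen to be a Jordan pair ideal of $V$, and ${\rm Id}_{\tkk(V)}(I)\subseteq I'$. Combining Lemma \ref{gradedideal}(iii) (graded essential equals essential in a semiprime Lie algebra) with Lemma \ref{tkk filters} yields $\ann V(I)=0$. Unpacking $[I',q]\subseteq \tkk(V)$ degree by degree in Jordan-triple language recovers $\{q,I^{-\sigma},V^\sigma\}+\{q,V^{-\sigma},I^\sigma\}\subseteq V^\sigma$ and $\{I^{-\sigma},q,V^{-\sigma}\}\subseteq V^{-\sigma}$, while $[I',q]\neq 0$ translates into the required ``either/or'' non-vanishing.

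The hardest part is the $\sigma=0$ case in the forward direction. Jacobi expansion of $[{\rm Id}_{\tkk(V)}(I),q_0]$ produces terms of the form $[V^+,q_0^-(v)]$ and $[q_0^+(u),V^-]$, which a priori sit in $\ider(W)$ rather than in $\ider(V)$; forcing them into $\ider(V)$ demands that $I$ be chosen large enough to absorb not only the original summands $a_k,b_k$ but also the auxiliary elements produced by a first round of Jacobi against $V^\pm$. Identifying the correct finite intersection and verifying uniform absorption across all three degrees is the technical heart of the argument.
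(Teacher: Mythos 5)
Your overall strategy for (ii) $\Rightarrow$ (i) matches the paper's, but both directions have concrete gaps. For (i) $\Rightarrow$ (ii) the paper simply invokes \cite[Theorem 2.10]{gg}; you attempt a direct argument and, as you yourself note, the degree-zero case is left open. The obstruction is real: writing $q_0=\sum_k[a_k,b_k]$ and expanding $[x,q_0]$ by Jacobi produces triple products such as $\{x,a_k,b_k\}$ with $x\in I^{-}$ but \emph{both} $a_k$ and $b_k$ in $W$, whereas the $\mathfrak{M}$-quotient conditions only control products with at most one entry outside $V$. Intersecting finitely many essential ideals does not repair this, because the auxiliary elements created by a round of Jacobi still lie in $W$ rather than in $V$; some further idea (the actual content of \cite[Theorem 2.10]{gg}) is needed, so this direction is not established. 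In addition, your justification that $\tkk(V)$ is semiprime is incorrect: Lemma \ref{essential=sturdy} combined with Lemma \ref{tkk filters} applied to $I=V$ only yields $Z_{\tkk(V)}=0$, which is strictly weaker than semiprimeness.

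For (ii) $\Rightarrow$ (i) the missing step is the passage from the graded ideal $I'$ to the ideal generated by its extreme components. From $\Ann_{\tkk(V)}(I')=0$ and ${\rm Id}_{\tkk(V)}(I)\subseteq I'$ you cannot conclude $\Ann_{\tkk(V)}({\rm Id}_{\tkk(V)}(I))=0$: a subideal of an essential ideal need not be essential. The paper proves directly that $I_V:=I'_1\oplus(\,[I'_1,V^-]+[V^+,I'_{-1}]\,)\oplus I'_{-1}$ is essential, using the semiprimeness of $V$ and the fact that every nonzero graded ideal of $\tkk(V)$ has a nonzero component in degree $\pm1$ (via \cite[Proposition 2.6]{gn}); only then does Lemma \ref{tkk filters} give $\Ann_V(I)=0$. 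For the same reason the ``either/or'' non-vanishing must be derived from $[I_V,q^\sigma]\neq 0$ rather than from $[I',q^\sigma]\neq 0$, since the latter could be witnessed entirely inside $I'_0\setminus (I_V)_0$; the paper obtains $[I_V,q^\sigma]\neq 0$ by showing $\Ann_{\tkk(W)}(I_V)=0$ via \cite[Lemma 2.11]{msm}, and in the case $[I^{-\sigma},q^\sigma]\neq 0$ it needs the faithfulness of the representation of $\ider(V)$ on $V$ to convert this into a nonzero Jordan triple product. None of these points appears in your write-up.
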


\begin{proof}
(i) $\Rightarrow$ (ii) is \cite[Theorem 2.10]{gg}.

(ii) $\Rightarrow$ (i). Take $0\neq q^\sigma \in W^{\sigma}$
($\sigma=\pm$) and apply Propositions \ref{quot and gr-quot}  to find a 3-graded ideal $I$ of $\tkk (V)$ with
$\Ann _{\tkk (V)}(I)=0$ and such that $0\neq
[I,\,q^{\sigma}]\subseteq \tkk (V)$. We claim that $I_V:=I_1\oplus
(\,[I_1,\,V^-]+[V^+,\,I_{-1}]\,)\oplus I_{-1}$ is an essential
ideal of $\tkk(V)$, where $I=I_1\oplus I_0\oplus I_{-1}$. Let
$K=K_1\oplus K_0\oplus K_{-1}$ be a nonzero 3-graded ideal of
$\tkk (V)$; the semiprimeness of $V$ implies that either $I_1\cap
K_1\neq 0$ or $I_{-1}\cap K_{-1}\neq 0$ (see the proof of
\cite[Proposition 2.6]{gn}) and therefore $I_V\cap K\neq 0$. By
Lemma and \ref{gr-ann} (iii), $\ann {\tkk (V)} (I_V)=0$, and by
Lemma \ref{tkk filters}, $\ann{V}((I_1, I_{-1}))=0$.

Denote $I_1$ and $I_{-1}$ by $I^+$ and $I^-$, respectively. Then,
for $\sigma=\pm$ we have:
$$
\begin{aligned}
{\{q^\sigma,\,I^{-\sigma},\,V^\sigma\}} & \subseteq
[\,[q^\sigma,\,I^{-\sigma}],\,V^\sigma]\subseteq V^\sigma \cr
\{I^{-\sigma},\,q^\sigma,\,V^{-\sigma}\} &\subseteq
[\,[I^{-\sigma}, \,q^\sigma],\,V^{-\sigma}]\subseteq V^{-\sigma}
\cr \{q^\sigma,\,V^{-\sigma},\,I^{\sigma}\}&\subseteq
 [\,[q^\sigma,\,V^{-\sigma}],\,I^\sigma]\subseteq
[\,[V^{-\sigma},\,I^\sigma],\,q^\sigma]\subseteq V^\sigma
\end{aligned}
$$

To complete the proof we have to check that either
$\{q^\sigma,\,I^{-\sigma},\,V^\sigma
\}+\{q^\sigma,\,V^{-\sigma},\,I^{\sigma}\}\neq 0$ or
$\{I^{-\sigma},\,q^\sigma,\,V^{-\sigma}\}\neq 0$. We have just
showed that $\Ann_{\tkk (V)}(I_V)=0$; using \cite[Lemma 2.11]{msm}
we obtain that $\Ann_{\tkk (W)}(I_V)=0$ and hence $0\neq
[I_V,\,q^\sigma]\subseteq [I,\,q^\sigma]\subseteq \tkk (V)$ which
implies that either $[(I_V)_0,\,q^\sigma]\neq 0$ or
$[I^{-\sigma},\,q^\sigma]\neq 0$. In the first case, we have:
$$0\neq
[(I_V)_0,\,q^\sigma]=[\,[I^\sigma,\,V^{-\sigma}],\,q^\sigma]+
[\,[V^\sigma,\,I^{-\sigma}],\,q^\sigma]\subseteq
\{I^\sigma,\,V^{-\sigma},\,q^\sigma\}+\{V^\sigma,\,I^{-\sigma},\,q^\sigma\}.$$
In the second case, apply that the representation of $\ider (V)$
on $V$ is faithful to obtain $$
\begin{aligned}
{0\neq [\,[I^{-\sigma},\,q^\sigma], \, V^{-\sigma}]} = \,& \{I^{-\sigma},\,q^\sigma, \, V^{-\sigma}\} \, \, \, \mbox{ or}\cr
0\neq [\,[I^{-\sigma},\,q^\sigma],\,V^\sigma]\subseteq
[\,[V^\sigma,\,I^{-\sigma}],\,q^\sigma]&
=\{V^\sigma,\,I^{-\sigma},\,q^\sigma\}= \{q^\sigma,\,I^{-\sigma},\,V^\sigma\}.
\end{aligned} $$
\end{proof}

We continue the section by examining the relationship between
maximal Jordan pairs of $\mathfrak{M}$-quotients (see \cite[3.1 and
Theorem 3.2]{gg} for definition) and maximal algebras of quotients
of Jordan 3-graded Lie algebras.

\begin{lem} \label{sturdy filter of TKK}
Let $V=(V^+,\,V^-)$ be a strongly nondegenerate Jordan pair. If
$I$ is an essential ideal of $\tkk(V)$, then there exists an
essential ideal $\hat{I}$ of $V$ such that ${\rm
Id}_{\tkk(V)}(\hat{I})$ is contained in $I$.
\end{lem}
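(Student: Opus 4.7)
The plan is to define $\hat{I}$ as the pair formed by the degree $\pm 1$ components of the graded essential subideal of $I$ provided by Lemma~\ref{gradedideal}, and then verify essentiality by exploiting the faithful action of $\ider(V)$ on $V$. Since $V$ is strongly nondegenerate, $\tkk(V)$ is (graded) semiprime, so Lemma~\ref{gradedideal} applies. First I would set $\tilde{I} := J + \pi_{-1}(J) + \pi_1(J)$ for $J = [[I,I],[I,I]]$; by Lemma~\ref{gradedideal} parts (i)--(ii), $\tilde{I}$ is a graded ideal of $\tkk(V)$ contained in $I$ and essential, and by part (iii) it is graded essential.

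Next, define $\hat{I} := (\tilde{I}_1, \tilde{I}_{-1})$. Verifying that $\hat{I}$ is an ideal of $V$ is direct from $\tilde{I}$ being a graded ideal of $\tkk(V)$: the relations $\{V^+,V^-,\hat{I}^+\} = [[V^+,V^-],\tilde{I}_1] \subseteq \tilde{I}_1$, $\{V^+,\hat{I}^-,V^+\} = [[V^+,\tilde{I}_{-1}],V^+] \subseteq \tilde{I}_1$, and hence $Q_{V^+}\hat{I}^- \subseteq \hat{I}^+$ (and symmetrically for the opposite side) follow from the graded ideal axioms. Also, since $[\tilde{I}_{\pm 1}, V^\mp] \subseteq \tilde{I}_0$, one has
$$
{\rm Id}_{\tkk(V)}(\hat{I}) = \tilde{I}_1 \oplus ([\tilde{I}_1, V^-] + [V^+, \tilde{I}_{-1}]) \oplus \tilde{I}_{-1} \subseteq \tilde{I} \subseteq I,
$$
giving the desired containment.

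The remaining and central task is to show $\hat{I}$ is essential in $V$. I would argue by contradiction: suppose $K = (K^+, K^-)$ is a non-zero ideal of $V$ with $K \cap \hat{I} = 0$. Then ${\rm Id}_{\tkk(V)}(K) = K^+ \oplus ([K^+,V^-]+[V^+,K^-]) \oplus K^-$ is a non-zero graded ideal of $\tkk(V)$, so by graded essentiality of $\tilde{I}$, the intersection $\tilde{I} \cap {\rm Id}_{\tkk(V)}(K)$ is non-zero. Its degree $\pm 1$ components are $\tilde{I}_{\pm 1} \cap K^\pm = \hat{I}^\pm \cap K^\pm = 0$, so there must exist a non-zero $d \in \tilde{I}_0 \cap ([K^+,V^-]+[V^+,K^-])$. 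Since $\ider(V)$ embeds into $\End_\Phi(V^+)\times \End_\Phi(V^-)$ by construction, its action on $V = V^+ \oplus V^-$ is faithful, so $[d, V^+] \neq 0$ or $[d, V^-] \neq 0$. But $\tilde{I}$ and ${\rm Id}_{\tkk(V)}(K)$ are both (graded) ideals containing $d$, so $[d, V^\pm] \subseteq \tilde{I}_{\pm 1} \cap K^\pm = 0$, a contradiction. Thus $\Ann_V(\hat{I}) = 0$ and Lemma~\ref{essential=sturdy}(ii) finishes the proof.

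The main obstacle is the degree-zero component in the last step: one could worry that $\tilde{I}$ meets ${\rm Id}_{\tkk(V)}(K)$ only through $L_0$, leaving no immediate contradiction at the Jordan-pair level. The argument succeeds precisely because faithfulness of the $\ider(V)$-action on $V$ converts any non-zero such $d$ into a non-zero bracket in $L_{\pm 1}$, which, by the ideal structure of both $\tilde{I}$ and ${\rm Id}_{\tkk(V)}(K)$, must lie in the trivially-intersecting piece $\tilde{I}_{\pm 1} \cap K^\pm$.
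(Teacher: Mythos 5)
Your proof is correct and follows essentially the same route as the paper's: both pass from $I$ to the graded essential ideal $\tilde I\subseteq I$ supplied by Lemma~\ref{gradedideal} and take $\hat I=(\tilde I_1,\tilde I_{-1})$. The only difference is that where the paper verifies essentiality of $\hat I$ by citing Lemma~\ref{tkk filters} together with the argument of Theorem~\ref{equiv Jordan pair and Lie quotients}, you unpack that step into a direct contradiction argument using the faithfulness of the $\ider(V)$-representation on $V$ --- which is the same underlying mechanism, correctly applied.
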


\begin{proof}
Consider an essential ideal $I$ of $\tkk (V)$, which is a strongly
nondegenerate Lie algebra (by \cite[Proposition 2.6]{esther});  in
particular, it is semiprime. Therefore, we may apply Lemma
\ref{gradedideal} (i) and (ii) to find an essential graded ideal
${\hat I}_{-1}\oplus {\hat I}_0\oplus {\hat I}_1$ of $\tkk(V)$
contained in ${I}$. It can be shown, as in the proof of Theorem
\ref{equiv Jordan pair and Lie quotients}, that ${\hat
I}_{-1}\oplus (\,[{\hat I}_1,\,V^-]+[V^+,\,{\hat I}_{-1}]\,)
\oplus {\hat I}_1\subseteq { I}$ is an essential ideal of
$\tkk(V)$ and, by means of Lemma \ref{tkk filters}, ${\hat
I}:=({\hat I}_1, {\hat I}_{-1})$ is an essential ideal of $V$.
\end{proof}

For a strongly nondegenerate Jordan pair $V$, denote its maximal
Jordan pair of $\mathfrak{M}$-quotients by $Q_m(V)$ (see  \cite{gg}
for its construction).

\begin{thm} \label{Q_m(V) and Q_m(TKK(V))}
Assume that $\frac{1}{6}\in \Phi$.

\begin{enumerate}
\item[\rm{(i)}] Let $V$ be a strongly nondegenerate Jordan pair. Then
$$Q_m(V)=\bigg(\big(Q_m(\tkk(V))\big)_1,\,\big(Q_m(\tkk(V))\big)_{-1}\bigg)$$
is the maximal Jordan pair of $\,\mathfrak{M}$-quotients of V.
\item[\rm{(ii)}] If $L=L_{-1}\oplus L_0\oplus L_1$ is a strongly
non-degenerate\index{strongly!non-degenerate!Lie algebra} Jordan
3-graded Lie algebra\index{Jordan!3-graded Lie algebra} satisfying
that $Q_m(L)$ is Jordan 3-graded, then $$Q_m(L)\cong
Q_m(\tkk(V))\cong \tkk(Q_m(V)),$$ where $V=(L_1,\,L_{-1})$ is the
associated Jordan pair of $L$.
\end{enumerate}
\end{thm}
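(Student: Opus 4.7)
The plan is to establish (i) by exploiting the 3-graded structure of $M:=Q_m(\tkk(V))$ provided by Theorem~\ref{isom between Q_m and Q_{gr-m}}, and then to derive (ii) as a short application of Lemma~\ref{neher93}. Since $V$ is strongly nondegenerate, $\tkk(V)$ is strongly nondegenerate (hence semiprime) by \cite[Proposition 2.6]{esther}, so $M$ exists. Applying Theorem~\ref{isom between Q_m and Q_{gr-m}} to the 3-graded semiprime algebra $\tkk(V)$ yields that $M$ is 3-graded, and since $\frac{1}{6}\in\Phi$ makes $\Phi$ both $2$- and $3$-torsion free, part~(ii) of the same theorem gives that $M$ is moreover strongly nondegenerate. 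The formula $Q_x y=\frac{1}{2}[[x,y],x]$, which defines a pair structure on the $\pm 1$ components of any 3-graded Lie algebra once $\frac{1}{6}\in\Phi$, then endows $W:=(M_1,M_{-1})$ with a Jordan pair structure in which $V=(\tkk(V)_1,\tkk(V)_{-1})$ sits as a subpair.

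To show $W$ is a pair of $\mathfrak{M}$-quotients of $V$, I would adapt the argument of (ii)~$\Rightarrow$~(i) in Theorem~\ref{equiv Jordan pair and Lie quotients}. Given $0\ne q^\sigma\in W^\sigma$, the hypothesis that $M$ is a graded algebra of quotients of $\tkk(V)$ produces a graded ideal $I=I_{-1}\oplus I_0\oplus I_1$ of $\tkk(V)$ with $\Ann_{\tkk(V)}(I)=0$ and $0\ne[I,q^\sigma]\subseteq\tkk(V)$; passing to $I_V:=I_1\oplus([I_1,V^-]+[V^+,I_{-1}])\oplus I_{-1}$ and invoking Lemma~\ref{tkk filters} one obtains $\hat I:=(I_1,I_{-1})$ as an ideal of $V$ with $\Ann_V(\hat I)=0$, after which the bracket and Jacobi-identity computations at the end of the proof of Theorem~\ref{equiv Jordan pair and Lie quotients} transfer verbatim and deliver the required absorption and nontriviality. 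For maximality, any pair of $\mathfrak{M}$-quotients $W'$ of $V$ provides, by Theorem~\ref{equiv Jordan pair and Lie quotients}, an algebra of quotients $\tkk(W')$ of $\tkk(V)$; Proposition~\ref{quot and gr-quot} upgrades this to a graded algebra of quotients, and then Theorem~\ref{axiomatic charact} together with Theorem~\ref{isom between Q_m and Q_{gr-m}} supplies a graded Lie embedding $\tkk(W')\hookrightarrow M$ fixing $\tkk(V)$, whose restriction to the $\pm 1$ components is a subpair embedding $W'\hookrightarrow W$ over $V$.

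For (ii), strong nondegeneracy of $L$ forces $Z(L)=0$ because any central element $z$ satisfies $(\Ad z)^2=0$, so $C_V=Z(L)\cap L_0=0$ in Lemma~\ref{neher93}, yielding $\tkk(V)\cong L$ and hence $Q_m(L)\cong Q_m(\tkk(V))$; by part~(i), the associated Jordan pair of $Q_m(\tkk(V))$ is $Q_m(V)$. Since $Q_m(L)$ is strongly nondegenerate by Theorem~\ref{isom between Q_m and Q_{gr-m}}(ii), its center vanishes as well, so a second application of Lemma~\ref{neher93} to the Jordan 3-graded algebra $Q_m(L)$ delivers $\tkk(Q_m(V))\cong Q_m(L)$. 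The most delicate step is the maximality argument in (i): the embedding $\tkk(W')\hookrightarrow M$ furnished by the Lie-quotient universal property must respect the 3-grading in order to descend to a pair morphism on the $\pm 1$ components, and this is precisely what Theorem~\ref{isom between Q_m and Q_{gr-m}} enables by identifying $M$ with the maximal graded algebra of quotients so that the graded universal property of Theorem~\ref{axiomatic charact} applies.
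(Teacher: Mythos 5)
Your part (ii) is essentially the paper's argument: strong nondegeneracy forces zero center, Lemma \ref{neher93} gives $L\cong\tkk(V)$, part (i) plus Theorem \ref{isom between Q_m and Q_{gr-m}} make $Q_m(L)$ a strongly nondegenerate 3-graded algebra with associated pair $Q_m(V)$, and the Jordan 3-grading hypothesis allows a second application of Lemma \ref{neher93}. For part (i), however, you take a genuinely different route. The paper's proof is a short reduction to \cite{gg}: Lemmas \ref{tkk filters} and \ref{sturdy filter of TKK} show that the ideals ${\rm Id}_{\tkk(V)}(\hat I)$, for $\hat I$ essential in $V$, are cofinal among the essential ideals of $\tkk(V)$, so the direct limits $Q_{\F_\tkk}(\tkk(V))$ and $Q_m(\tkk(V))$ coincide; Theorem \ref{isom between Q_m and Q_{gr-m}}(i) makes this graded, and \cite[Theorem 3.2]{gg} already identifies the $\pm1$ components of $Q_{\F_\tkk}(\tkk(V))$ with $Q_m(V)$. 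You instead reprove the content of \cite[Theorem 3.2]{gg} from scratch: you verify directly that $W:=\big((Q_m(\tkk(V)))_1,(Q_m(\tkk(V)))_{-1}\big)$ is a pair of $\mathfrak{M}$-quotients by rerunning the computation of Theorem \ref{equiv Jordan pair and Lie quotients}(ii)$\Rightarrow$(i) inside $Q_m(\tkk(V))$, and you get maximality by pushing any competitor $W'$ through $\tkk(-)$, Proposition \ref{quot and gr-quot} and the universal property of $Q_{gr-m}(\tkk(V))$. Your route never needs Lemma \ref{sturdy filter of TKK} or the construction of $Q_{\F_\tkk}$, so it is more self-contained, at the cost of length. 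Three points should be tightened. First, the embedding $\tkk(W')\hookrightarrow Q_m(\tkk(V))$ comes from the maximality of $Q_{gr-m}$ among graded algebras of quotients (stated just before Theorem \ref{axiomatic charact}), not from Theorem \ref{axiomatic charact} itself. Second, having shown that $W$ is a pair of $\mathfrak{M}$-quotients into which every such pair embeds over $V$, you must still identify $W$ with $Q_m(V)$ as defined in \cite[3.1 and Theorem 3.2]{gg}; the standard double-embedding argument ($Q_m(V)\hookrightarrow W\hookrightarrow Q_m(V)$ composes to a monomorphism fixing $V$ and agreeing with the identity on images of essential ideals, hence equal to the identity) does this, but it is a genuine step to record. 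Third, ``transfer verbatim'' hides one adaptation: where the proof of Theorem \ref{equiv Jordan pair and Lie quotients} invokes faithfulness of the representation of $\ider(V)$ on $V$, you must instead observe that $\Ann_{Q_m(\tkk(V))}(\tkk(V))=0$ because $Q_m(\tkk(V))$ is a weak algebra of quotients of $\tkk(V)$ and $V^+\oplus V^-$ generates $\tkk(V)$. With these repairs the argument goes through.
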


\begin{proof}
(i). The Lie algebras $Q_{\F_\tkk}(\tkk (V))$ and $Q_m(\tkk (V))$
are isomorphic by Lemmas \ref{tkk filters} and \ref{sturdy filter
of TKK} (see \cite{gg} for the definition of $Q_{\F_\tkk}(\tkk
(V)$). On the other hand, Theorem \ref{isom between Q_m and
Q_{gr-m}} (i) implies  that they are isomorphic to
$Q_{gr-m}(\tkk(V))$. (Note that $\tkk (V)$ is a strongly
nondegenerate Lie algebra by \cite[Proposition 2.6]{gg} so, it has
sense to consider its maximal graded algebra of quotients). Now,
the result follows by \cite[Theorem 3.2]{gg}.

(ii). The Lie algebra $L$ has zero center\index{center} because it
is strongly nondegenerate, hence $L\cong \tkk (V)$ (use Lemma
\ref{neher93}) and, obviously, $Q_m(L)\cong Q_m(\tkk (V))$. This one
is a strongly non-degenerate Lie algebra (by \ref{semiprimeness}) and has a 3-grading (by (i)) with associated Jordan
pair $Q_m(V)$. The hypothesis on $Q_m(L)$ allows us to use again
Lemma \ref{neher93} obtaining $Q_m(L) \cong \tkk(Q_m(V)).$
\end{proof}

The following is an example of a strongly non-degenerate Jordan
3-graded Lie algebra $L$ such that its maximal (graded) algebra of
quotients $Q_m(L)$ is not Jordan 3-graded. If we denote by $V$ the
associated Jordan pair of $L$, we obtain that $\tkk(Q_m(V))$ is not
(graded) isomorphic to $Q_m(L))$ (since $\tkk(Q_m(V))$ is Jordan
3-graded) which thereby means that the condition on $L$ in Theorem
\ref{Q_m(V) and Q_m(TKK(V))} (ii) is necessary.

\begin{ejm}
{\rm Denote by $\M_{\infty}(\R)=\cup^\infty_{n=1} \M_n(\R)$ the
algebra of infinite matrices with a finite number of nonzero entries
and consider
$$L:=\mathfrak{sl}_\infty(\R)=\{x\in \M_\infty(\R) \mid {\rm
tr}(x)=0\},$$ which is a simple Lie algebra of countable dimension
(see \cite[Theorem 1.4]{Baranov}).

Denote by $e_{ij}$ the matrix whose entries are all zero except for
the one in row $i$ and column $j$ and consider the orthogonal
idempotents $e:=e_{11}$ and $f:=\mbox{diag}(0,1,1,\ldots)$ (note
that $f\notin \M_\infty(\R)$); we can see $L$ as a 3-graded Lie
algebra by doing $L=L_{-1}\oplus L_0\oplus L_1,$ where $L_{-1}=eLf$,
$L_0=\{exe+fxf\mid x\in L\}$ and $L_1=fLe$.

Let $exe+\,fxf$ be an element of $L_0$ with $x=(x_{ij})\in \M_n(\R)$
for some $n\in \N$. Taking into account that ${\rm tr}(x)=0$, we
obtain:
$$exe+\,fxf= \sum^n_{i=2}\sum^n_{j=2}\,[-e_{1j},\,x_{ij}e_{i1}]\in [L_{-1},\,L_1],$$
This shows that $L_0=[L_{-1},\,L_1]$, i.e., $L$ is Jordan 3-graded.

In what follows, we will prove that $\der (L)$ is not Jordan
3-graded. The simplicity of $L$ implies that $Q_m(L)\cong \der (L)$;
on the other hand, the strongly nondegeneracy of $L$ allows us to
apply \cite[Proposition 1.7]{gg} obtaining that $\der (L)$ is
3-graded. Now, take, $\delta:=\Ad e$; one can easily check that:
$$\delta (L_{-1})\subseteq L_{-1}, \, \delta(L_0)=0 \, \mbox{ and }\,
\delta(L_1)\subseteq L_1,$$ which means that $\delta\in\der (L)_0$.
But note that $\delta \notin [\der(L)_{-1}, \,\der(L)_1]$ since the
elements of $[\der(L)_{-1}, \,\der(L)_1]$ have zero trace on every
finite dimensional subspace of $L$ while the trace of $\delta$ is
always nonzero. Therefore, $[\der(L)_{-1},\,\der(L)_1]\varsubsetneq
\der(L)_0$, i.e., $\der(L)$ is not Jordan 3-graded. }
\end{ejm}

\begin{rem}
{\rm Note that there exist non-trivial Jordan 3-graded Lie algebras
such that their maximal (graded) algebra of quotients are also
Jordan 3-graded Lie algebras. For example:

Let $F$ be a field and consider the Lie algebra
$$L:=\mathfrak{sl}_2(F)=\{x\in \M_2(F) \mid {\rm tr}(x)=0\}.$$ We have that $L$
is a Jordan 3-graded Lie algebra with the grading $L=L_{-1}\oplus
L_0\oplus L_1$, where
$$L_{-1}=Fe_{21},\,L_0=F(e_{11}-e_{22})\, \mbox{ and }\,
L_1=Fe_{12}.$$ Moreover, $L$ is a finite dimensional semisimple Lie
algebra and applying \cite[Lemma 3.9]{msm} we obtain that $L\cong
Q_m(L)$.}
\end{rem}

We finish the paper with analogues to
Theorem \ref{Q_m(V) and Q_m(TKK(V))}  but for Jordan triple
systems and Jordan algebras. In order to not enlarge the paper
 we refer the reader to \cite{loos, neher87, Mey} for
basic definitions and results on Jordan triple systems and algebras.

\begin{thm} \label{Q_m(T)}
Let $T$ be a strongly nondegenerate Jordan triple system over a ring
of scalars $\Phi$ containing $\frac{1}{6}$. Then the maximal Jordan
triple system of $\mathfrak{M}$-quotients of $T$ is the first
component of the maximal algebra of quotients of the TKK-algebra of
the double Jordan pair $V(T)=(T,\,T)$ associated to $T$, i.e.,
$$Q_m(T)=(Q_m(\tkk (V(T))))_1.$$
\end{thm}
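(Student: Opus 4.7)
The plan is to reduce the Jordan triple system case to the already established Jordan pair case (Theorem \ref{Q_m(V) and Q_m(TKK(V))} (i)) through the classical double-pair construction. Recall that for a Jordan triple system $T$ with triple product $\{\cdot,\cdot,\cdot\}$ one forms the double Jordan pair $V(T)=(T,T)$ by setting $Q^+_x y = Q^-_x y = Q_x y$ (the triple product quadratic operator). The pair $V(T)$ carries a natural exchange involution $\tau\colon V(T)\to V(T)$ swapping the two copies of $T$, and the fixed data of this involution recovers $T$. By construction, $T$ is strongly nondegenerate (respectively, semiprime) if and only if $V(T)$ is so, and ideals of $T$ correspond bijectively with $\tau$-invariant ideals of $V(T)$ via $I\mapsto (I,I)$.

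First, I would establish the triple/pair translation for quotients: a Jordan triple system $S\supseteq T$ is a triple system of $\mathfrak{M}$-quotients of $T$ if and only if $(S,S)$ is a Jordan pair of $\mathfrak{M}$-quotients of $V(T)=(T,T)$. This should follow directly from unpacking the definition of pair of $\mathfrak{M}$-quotients, since the two sided conditions on $q\in W^{\pm}$ become the same (symmetric) condition on $q\in S$ once we identify $W=(S,S)$, and an ideal of $V(T)$ with zero annihilator can be symmetrized to a $\tau$-invariant one with zero annihilator (using Lemma \ref{essential=sturdy} to pass between essential ideals and ideals with zero annihilator). As a consequence, $Q_m(V(T))=(Q_m(T),Q_m(T))$.

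Next, I would invoke the pair version already proved: by Theorem \ref{Q_m(V) and Q_m(TKK(V))}~(i), applied to the strongly nondegenerate pair $V(T)$,
\[
Q_m(V(T))=\bigl((Q_m(\tkk(V(T))))_1,\,(Q_m(\tkk(V(T))))_{-1}\bigr).
\]
Taking the first component and combining with the previous step gives
\[
Q_m(T)=\bigl(Q_m(V(T))\bigr)^{+}=(Q_m(\tkk(V(T))))_1,
\]
which is the claimed identity. Notice that to apply Theorem \ref{Q_m(V) and Q_m(TKK(V))}~(i) we need $\tfrac{1}{6}\in \Phi$, which is part of our hypothesis, and we need $\tkk(V(T))$ to be strongly nondegenerate, which follows from the strong nondegeneracy of $V(T)$ (and hence of $T$) via \cite[Proposition 2.6]{gg}.

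The main obstacle, and the only step requiring genuine work beyond quoting the pair-theoretic results, is the first one: verifying that the maximal-quotient constructions for triple systems and for their associated double pairs match up, namely $Q_m(V(T))=(Q_m(T),Q_m(T))$. One direction (triple quotients of $T$ give rise to pair quotients of $V(T)$) is straightforward. The other direction requires exploiting the exchange involution $\tau$ on $V(T)$ to show that the maximal pair of quotients is itself $\tau$-invariant and therefore of the diagonal form $(S,S)$ for a Jordan triple system $S$ which, by maximality, must coincide with $Q_m(T)$. Once this symmetry argument is in place, the rest of the proof is a direct translation of Theorem \ref{Q_m(V) and Q_m(TKK(V))}~(i) from pairs to triple systems.
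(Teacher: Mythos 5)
Your proposal is correct and follows essentially the same route as the paper: both reduce to the Jordan pair case by applying Theorem \ref{Q_m(V) and Q_m(TKK(V))}~(i) to the double pair $V(T)$ and then matching the triple-system and pair notions of $\mathfrak{M}$-quotients. The only difference is that the translation step you propose to establish by hand via the exchange involution (that $Q_m(V(T))$ is the double pair of $Q_m(T)$) is simply quoted by the paper from \cite[4.5 and Theorem 4.6]{gg}, together with the triple-system versions of Lemmas \ref{tkk filters} and \ref{sturdy filter of TKK}.
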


\begin{proof}
The Jordan pair $V(T)=(T,\,T)$ is strongly nondegenerate since $T$
is so. By Theorem \ref{Q_m(V) and Q_m(TKK(V))} (i) we have
$$Q_m(V(T))=\bigg(\big(Q_m(\tkk(V(T)))\big)_1,\,\big(Q_m(\tkk(V(T)))\big)_{-1}\bigg).$$
The conclusion follows now from the versions of Lemmas \ref{tkk
filters} and \ref{sturdy filter of TKK} for Jordan triple systems
and from \cite[4.5 and Theorem 4.6]{gg}.
\end{proof}

\begin{thm}
Let $J$ be a strongly nondegenerate Jordan algebra over a ring of
scalars $\Phi$ containing $\frac{1}{6}$. Then
$$Q_m(J)=Q_m(J_T)=(Q_m(\tkk(V(J_T))))_1,$$ is the maximal Jordan
algebra of quotients of $J$, where $J_T$ denotes the Jordan triple
system associated to $J$ and $V(J_T)=(J_T,\,J_T)$ is the double
Jordan pair associated to $J_T$.
\end{thm}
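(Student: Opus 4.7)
The plan is to reduce the statement to the Jordan-triple-system version already established in Theorem \ref{Q_m(T)}, by passing from the Jordan algebra $J$ to its associated triple system $J_T$.

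First I would verify the transfer of strong nondegeneracy: since $J$ is a strongly nondegenerate Jordan algebra, the associated triple system $J_T$, whose triple product is $\{x,y,z\}=(x\cdot y)\cdot z+(z\cdot y)\cdot x-(x\cdot z)\cdot y$, is also strongly nondegenerate (this is a standard fact, see e.g.\ \cite{loos, Mey}). Thus $J_T$ satisfies the hypothesis of Theorem \ref{Q_m(T)}, and therefore
\[
Q_m(J_T)=\bigl(Q_m(\tkk(V(J_T)))\bigr)_1,
\]
which yields the second equality in the statement.

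For the first equality $Q_m(J)=Q_m(J_T)$, the idea is that the notion of algebra of quotients for a unital-free Jordan algebra and that of triple system of quotients for its associated Jordan triple system are translations of one another. Concretely, one would check that a Jordan algebra $J\subseteq B$ is an algebra of $\mathfrak{M}$-quotients of $J$ in the Jordan algebra sense if and only if the associated triple inclusion $J_T\subseteq B_T$ is a system of $\mathfrak{M}$-quotients in the triple sense; this is essentially the content of the results in \cite[Section 4]{gg} relating Jordan algebra and Jordan triple system quotients. Granted this translation, the maximal objects on both sides must coincide: each is an overalgebra (resp.\ overtriple) of the other that is a system of quotients, and maximality forces a canonical identification $Q_m(J)\cong Q_m(J_T)$ as $\Phi$-modules compatible with both the algebra and the triple structures.

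The main obstacle is therefore not the TKK--triple reduction, which is handed to us by Theorem \ref{Q_m(T)}, but the careful verification of the correspondence between $\mathfrak{M}$-quotients of $J$ and of $J_T$. One has to check that essential ideals of $J$ and of $J_T$ match up (so that the respective directed systems of ``partial multiplication'' maps are the same), and that the universal property characterising $Q_m(J)$ lifts to the triple level and vice versa. Once this is done, combining the identification $Q_m(J)=Q_m(J_T)$ with the triple-system version of Theorem \ref{Q_m(T)} gives the full chain $Q_m(J)=Q_m(J_T)=(Q_m(\tkk(V(J_T))))_1$.
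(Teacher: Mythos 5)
Your proposal is correct and follows essentially the same route as the paper: transfer strong nondegeneracy from $J$ to $J_T$, invoke the correspondence between Jordan algebra quotients of $J$ and triple system quotients of $J_T$ established in \cite{gg} to get $Q_m(J)=Q_m(J_T)$, and then apply Theorem \ref{Q_m(T)}. The only slip is the citation: the algebra--triple quotient correspondence is \cite[5.4 and Theorem 5.5]{gg}, not Section 4 of that paper (Section 4 there treats triple systems versus pairs, which is what Theorem \ref{Q_m(T)} already uses).
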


\begin{proof}
Note that $J_T$ is a strongly nondegenerate Jordan triple system
by the strong nondegeneracy of the Jordan algebra $J$. From
\cite[5.4 and Theorem 5.5]{gg} it follows that the maximal Jordan
algebra of quotients $Q_m(J)$ is $Q_m(J_T)$. Finally apply Theorem
\ref{Q_m(T)} to reach the conclusion.
\end{proof}


\section*{acknowledgments}

The authors would like to thank Antonio Fern\'andez L\'opez for
his useful comments.

The first author has been supported by a grant by the Junta de
Andaluc\'\i a (BOJA n. 120 de 21 de junio de 2004), and both
authors have been supported by the Spanish MEC and Fondos FEDER
jointly, through project MTM2004-06580-C02-02, and by the Junta de
Andaluc\'{\i}a (projects FQM-336, FQM-1215 and FQM2467).


\end{document}